\DeclareMathAlphabet{\mathbbold}{U}{bbold}{m}{n}
\newtheorem{theorem}{Theorem}
\newtheorem{definition}{Definition}
\newtheorem{lemma}{Lemma}
\newtheorem{corollary}{Corollary}
\renewcommand{\backref}[1]{}
\renewcommand{\backrefalt}[4]{%
	\ifcase #1 %
	\or
	[p.\ #2]%
	\else
	[pp.\ #2]%
	\fi}
\newcommand{\para}{%
	\@startsection{paragraph}{4}%
	{\z@}{2ex \@plus 3.3ex \@minus .2ex}{-1em}%
	{\normalfont\normalsize\bfseries}%
}
\newtheorem{proposition}[theorem]{Proposition}
\newcommand{\R}{\mathbb{R}}
\newcommand{\E}{\mathop{{}\mathbb{E}}}
\newcommand{\dotp}[2]{\langle #1, #2 \rangle}
\newcommand{\abs}[1]{\left| #1 \right|}
\newcommand{\iprod}[2]{\left\langle #1, #2 \right \rangle}
\newcommand{\defeq}{\coloneqq}
\newcommand{\mF}{\mathcal{F}}
\newcommand{\eps}{\epsilon}
\newcommand{\N}{\mathbb{N}}
\renewcommand{\S}{\mathbf{S}}
\newcommand{\tO}{\widetilde{O}}
\newcommand{\floor}[1]{\left\lfloor{#1}\right\rfloor}
\newcommand{\norm}[1]{\|{#1}\|}
\newcommand{\Norm}[1]{\left\|{#1}\right\|}
\renewcommand{\>}{\rangle}
\DeclareRobustCommand{\noop}[1]{}
\DeclareMathOperator{\poly}{poly}
\DeclareMathOperator*{\argmax}{argmax}
\renewcommand{\S}{\mathcal{S}}
\newcommand{\smax}{\mathsf{smax}}
\newcommand{\aff}{\mathsf{vec}}
\newcommand{\Oracle}{\mathsf{Q}}
\newcommand{\mR}{\mathcal{R}}
\begin{document}
	
	\title{Near-Optimal Lower Bounds For Convex Optimization\\ For All Orders of Smoothness}
	
	\author{
		Ankit Garg\footnote{Microsoft Research India. Email: \texttt{garga@microsoft.com}}
		\and
		Robin Kothari\footnote{Microsoft Quantum and Microsoft Research. \texttt{robin.kothari@microsoft.com}}
		\and 
		Praneeth Netrapalli\footnote{Microsoft Research India. Part of the work was done after the author moved to Google Research India. Email: \texttt{pnetrapalli@google.com}}
		\and
		Suhail Sherif\footnote{Vector Institute, Toronto, ON, Canada. Email: \texttt{suhail.sherif@gmail.com}}
	}
	\date{}
	
	\maketitle
	
	\begin{abstract}
		We study the complexity of optimizing highly smooth convex functions. 
		For a positive integer $p$, we want to find an $\eps$-approximate minimum of a convex function $f$, given oracle access to the function and its first $p$ derivatives, assuming that the $p$th derivative of $f$ is Lipschitz. 
		
		Recently, three independent research groups (Jiang et al., PLMR 2019; Gasnikov et al., PLMR 2019; Bubeck et al., PLMR 2019) developed a new algorithm that solves this problem with $\tO(1/\eps^{\frac{2}{3p+1}})$ oracle calls for constant $p$. This is known to be optimal (up to log factors) for deterministic algorithms, but known lower bounds for randomized algorithms do not match this bound. We prove a new lower bound that matches this bound (up to log factors), and holds not only for randomized algorithms, but also quantum algorithms.
	\end{abstract}
	
	\section{Introduction}
\label{sec:intro}

In recent years, several optimization algorithms have been proposed, especially for machine learning problems, that achieve improved performance by exploiting the smoothness of the function to be optimized. 
Specifically, these algorithms have better performance when the function's first, second, or higher order derivatives are Lipschitz~\cite{Nes08,baes2009estimate,MS13,nesterov2019implementable,GDG+19,JWZ19,bubeck19ahighlysmooth}.

In this paper we study the problem of minimizing a highly smooth convex function given black-box access to the function and its higher order derivatives. The simplest example of the family of problems we consider here is the problem of approximately minimizing a convex function $f:\R^n \to \R$, given access to an oracle that on input $x \in \R^n$ outputs $(f(x),\nabla f(x))$, under the assumption that the function's first derivative, its gradient $\nabla f$, has bounded Lipschitz constant. This problem can be solved by Nesterov's accelerated gradient descent, and it is known that this algorithm is optimal (in high dimension) among deterministic and randomized algorithms \cite{Nes83,nemirovsky1983problem}. 

More generally, for any positive integer $p$, consider the $p$th-order optimization problem: For known $R>0$, $\eps>0$, and $L_p>0$, we have a $p$ times differentiable convex function $f:\R^n \to \R$ whose $p$th derivative has Lipschitz constant at most $L_p$, which means
\begin{equation}\label{eq:Lp}
    \norm{\nabla^p f(x) - \nabla^p f(y)} \leq L_p \norm{x-y},
\end{equation}
where $\norm{\cdot}$ is the $\ell_2$ norm (for vectors) or induced $\ell_2$ norm 
(for operators)\footnote{For a symmetric $p^{\textrm{th}}$ order tensor $T$, the induced norm is defined as $\max_{x_i \in \R^n} \frac{{{T(x_1,x_2,\cdots,x_p)}}}{\prod_{i=1}^p \norm{x_i}}$.}. 
Our goal is to find an $\eps$-approximate minimum of this function in a ball of radius $R$, which is any $x^*$ that satisfies
\begin{equation}\label{eq:eps}
    f(x^*)-\min_{x\in B_R(0)} f(x) \leq \eps, 
\end{equation}
where $B_R(0)$ is the $\ell_2$-ball of radius $R$ around the origin. We can access the function $f$ through a $p$th order oracle, which when queried with a point $x \in \R^n$ outputs 
\begin{equation}\label{eq:oracle}
    (f(x), \nabla f(x), \ldots, \nabla^p f(x)).
\end{equation}
As usual, $\nabla^p f(x)$ denotes the $p$th derivative of $f(x)$.

Our primary object of study will be the minimum query cost of an algorithm that solves the problem, i.e. the number of queries (or calls) to the oracle in \cref{eq:oracle} that an algorithm has to make.\footnote{For simplicity we assume that the oracle's output is computed to arbitrarily many bits of precision. 
This only makes our results stronger, since we prove lower bounds in this paper.
} 
For a fixed $p$, it seems like this problem has 4 independent parameters, $n$, $L_p$, $R$, and $\eps$, but the parameters are not all independent since we can scale the input and output spaces of the function to affect the latter 3 parameters. Thus the complexity of any algorithm can be written as a function of $n$ and $L_pR^{p+1}/\eps$. In this paper we focus on the high-dimensional setting where $n$ may be much larger than the other parameters, and the best algorithms in this regime have complexity that only depends on $L_pR^{p+1}/\eps$ with no dependence on $n$.

As noted, the $p=1$ problem has been studied since the early 80s \cite{Nes83,nemirovsky1983problem}, and the $p>1$ problem has also been considered~\cite{Nes08,MS13}. 
In particular, it was known \cite{baes2009estimate,nesterov2019implementable} that the problem can be solved by a deterministic algorithm that makes
\begin{equation}
    \tilde{O}_p\left(\left({L_pR^{p+1}}/{\epsilon}\right)^{1/(p+1)} 
    \right)
\end{equation}
oracle calls,\footnote{Note that the query complexity does not have any dependence on the dimension $n$. Of course, actually implementing each query will take poly$(n)$ time, but we only count the number of queries here.} where the subscript $p$ in the big Oh (or big Omega) notation means the constant in the big Oh can depend on $p$. In other words, this notation means that we treat $p$ as a constant.

In an exciting recent development, new algorithms were proposed for all $p$ (with very similar complexity) by three independent groups of researchers: Gasnikov, Dvurechensky, Gorbunov, Vorontsova, Selikhanovych, and Uribe~\cite{GDG+19}; Jiang, Wang, and Zhang~\cite{JWZ19}; Bubeck, Jiang, Lee, Li and Sidford~\cite{bubeck19ahighlysmooth}. 
All three groups develop deterministic algorithms that make
\begin{equation}
    \tilde{O}_p\left(\left({L_pR^{p+1}}/{\epsilon}\right)^{2/(3p+1)} 
    \right)
\end{equation}
oracle calls. 

This algorithm is nearly optimal among deterministic algorithms, since the works \cite{nesterov2019implementable,ArjevaniSS19} showed that any deterministic algorithm that solves this problem must make $\Omega_p\left(\left({L_pR^{p+1}}/{\epsilon}\right)^{2/(3p+1)}\right)$
queries. 
However, for randomized algorithms, the known lower bound is weaker. Agarwal and Hazan~\cite{agarwal2018lower} showed that any randomized algorithm must make  
\begin{equation}
    \Omega_p\left(\left({L_pR^{p+1}}/{\epsilon}\right)^{2/(5p+1)}\right)    
\end{equation}
queries. To the best of our knowledge, no lower bounds are known in the setting of high-dimensional smooth convex optimization against quantum algorithms, although quantum lower bounds are known in the low-dimensional setting~\cite{CCLW20,vAGGdW20} and for non-smooth convex optimization~\cite{GKNS21}.

In this work, we close the gap (up to log factors) between the known algorithm and randomized lower bound for all $p$. Furthermore, our lower bound also holds against quantum algorithms.

\begin{theorem}\label{thm:main}
    Fix any $p \in \N$. For all $\epsilon>0$, $R>0$, $L_p>0$, there exists an $n>0$ and a set of $n$-dimensional functions $\mF$ with $p$th-order Lipschitz constant $L_p$ (i.e., satisfying \cref{eq:Lp}) such that any randomized or quantum algorithm that outputs an $\eps$-approximate minimum (satisfying \cref{eq:eps}) for any function $f \in \mF$ must make 
    \begin{equation}
        \Omega_p\left(\left( {L_pR^{p+1}}/{\epsilon} \right)^{2/(3p+1)} \left(\log{L_pR^{p+1}}/{\epsilon}\right)^{-2/3}\right)    
    \end{equation}
    queries to a $p$th order oracle for $f$ (as in \cref{eq:oracle}).
\end{theorem}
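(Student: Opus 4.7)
The plan is to construct a distribution over hard $p$th-order-smooth convex functions whose $\eps$-approximate minimization requires discovering many hidden directions, and then bound the per-query progress against both randomized and quantum algorithms by a single hybrid argument in a very high-dimensional ambient space. Set $T=\Theta_p\!\left((L_pR^{p+1}/\eps)^{2/(3p+1)}(\log(L_pR^{p+1}/\eps))^{-2/3}\right)$ and let $n$ be polynomial in $T$. Sample a Haar-random orthonormal sequence $\mathbf v_1,\ldots,\mathbf v_T\in\mathbb R^n$ and take
\begin{equation*}
f_{\mathbf V}(x) \;=\; \gamma\,\psi\!\Big(\max_{i\in[T]} \langle v_i,x\rangle - i\,\delta\Big) \;+\; \frac{\alpha}{p+1}\|x\|^{p+1},
\end{equation*}
where $\psi:\mathbb R\to\mathbb R$ is a $C^p$ smoothing of the positive part $(\cdot)_+$ supported in a short window, and $\delta,\alpha,\gamma$ are tuned so that $\nabla^p f_{\mathbf V}$ satisfies \eqref{eq:Lp} with constant $L_p$, the minimizer lies inside $B_R(0)$, and $f_{\mathbf V}(0)-\min_{B_R(0)}f_{\mathbf V}>2\eps$. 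The value of $T$ is obtained by balancing the smoothness scaling of $\psi$ against the optimality gap, and it produces exactly the target rate.

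\textbf{Zero-chain structure.} The key structural property is a ``zero-chain'': $\psi$ and $\delta$ are chosen so that for every $x$ with $\langle v_j,x\rangle<(j-1)\delta+\eta$, the function value and all its derivatives through order $p$ at $x$ depend only on $v_1,\ldots,v_{j-1}$. A standard $C^p$ bump makes $\psi$ and each of its first $p$ derivatives vanish outside a short active window, which yields this property. Consequently, a $p$th-order oracle query at such an $x$ reveals no information about $v_j,\ldots,v_T$. Conversely, any $\eps$-approximate minimizer must satisfy $\langle v_j,x^*\rangle\gtrsim(j-1)\delta+\eta$ for every $j\le T$ (verified by evaluating $f_{\mathbf V}$ at a point with comparable projections on all $v_i$ and using convexity), so the algorithm must activate each coordinate in succession.

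\textbf{Hybrid argument and main obstacle.} The query lower bound then follows from a hybrid argument that handles randomized and quantum access uniformly. Write $|\phi_t(\mathbf V)\rangle$ for the algorithm--oracle joint state after $t$ quantum queries to the $p$th-order oracle $O_{\mathbf V}$ of $f_{\mathbf V}$, and let $\mathbf V'$ be $\mathbf V$ with $v_j$ replaced by an independent Haar-random unit vector in the orthogonal complement of $v_1,\ldots,v_{j-1}$ for a worst-case index $j$. The standard hybrid inequality gives
\begin{equation*}
\big\||\phi_T(\mathbf V)\rangle-|\phi_T(\mathbf V')\rangle\big\|\;\le\;\sum_{t=1}^{T}\big\|(O_{\mathbf V}-O_{\mathbf V'})|\phi_{t-1}(\mathbf V)\rangle\big\|.
\end{equation*}
The zero-chain property forces $O_{\mathbf V}-O_{\mathbf V'}$ to act only on amplitudes whose query point $x$ has $|\langle v_j,x\rangle|\gtrsim\delta$, and Haar concentration of $v_j$ conditioned on $v_1,\ldots,v_{j-1}$ bounds the $\ell_2$-mass of such amplitudes by $O(R/(\delta\sqrt{n}))$ per step; with $n$ polynomial in $T$, this drives the whole sum below a constant whenever $T$ is below the claimed bound. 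The hardest step is exactly this quantum hybrid bound: superposition queries can in principle amplify the tiny leak of $v_j$ carried through the higher-order derivative tensors, so $\psi$ must be designed to annihilate every one of its first $p$ derivatives outside the active window, not just $\psi$ itself. This is the analogue, for the $p$th-order oracle, of the technique developed for non-smooth convex optimization in \cite{GKNS21}, and the careful joint tuning of $\psi,\delta,\alpha,\gamma,n$ is what converts the information-theoretic indistinguishability into the stated near-optimal $(L_pR^{p+1}/\eps)^{2/(3p+1)}$ rate up to the logarithmic factor.
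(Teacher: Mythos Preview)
Your hybrid argument is in the right spirit and is close to how the paper handles both the randomized and quantum cases once a suitable hard instance is in hand. The real gap is in the construction of the hard function.

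As written, $f_{\mathbf V}(x)=\gamma\,\psi\bigl(\max_i\langle v_i,x\rangle-i\delta\bigr)+\tfrac{\alpha}{p+1}\|x\|^{p+1}$ is not $p$-times differentiable, let alone $L_p$-smooth in the sense of \eqref{eq:Lp}: the inner $\max$ has gradient jumps of order $1$ along the hyperplanes where the argmax changes, and composing with a smooth scalar $\psi$ does not remove them---wherever $\psi'\neq 0$ the composition inherits the nondifferentiability of the max. So your function class does not even belong to the problem's hypothesis class, and the ``zero-chain'' claim (which speaks of derivatives through order $p$) is vacuous at exactly the points that matter.

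This is the crux of the paper, not a technicality. Going from the Agarwal--Hazan randomized bound with exponent $2/(5p+1)$ to the near-optimal $2/(3p+1)$ amounts to smoothing the Nemirovski max instance while keeping the $p$th-derivative Lipschitz constant down at $O_p(k^{3p/2})$ rather than $O_p(k^{5p/2})$. The paper does this with a two-stage smoothing: first it replaces the bare max by a staggered max of partial softmaxes, $h=\max_i f_i$ with $f_i=\smax_\rho^{\le i}(\aff_V(\cdot))+\rho(k-i)n^{-\alpha}$, engineered so that at every kink of $h$ the two competing pieces $f_i,f_j$ have gradients within $O(n^{-\alpha})$ of each other (\Cref{lem:nearlysamesmax}); only then does it apply randomized ball-smoothing $\S$. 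Because the non-smooth part of $h$ carries a polynomially small Lipschitz constant, the outer smoothing contributes negligibly and the final function has $L_p=O_p(k^{3p/2}(\log k)^p)$ (\Cref{lem:gsmoothness}). Your sentence ``the value of $T$ is obtained by balancing the smoothness scaling of $\psi$ against the optimality gap, and it produces exactly the target rate'' is precisely where all of this work must go, and a one-dimensional $C^p$ bump $\psi$ wrapped around a raw max cannot supply it.
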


In fact, this lower bound holds even against highly parallel randomized algorithms, where the algorithm can make poly($n,L_pR^{p+1}/{\epsilon}$) queries in each round and we only count the total number of query rounds (and not the total number of queries). See~\cite{BJLLS19} for previous work in this setting, including speedups for first-order convex optimization in the low dimensional setting. 

In this introduction, we have deliberately avoided explaining the quantum model of computation to make the results accessible to readers without a background in quantum computing. The entire paper is written so that the randomized lower bound is fully accessible to any reader who does not wish to understand the quantum model and quantum lower bound. 
For readers familiar with quantum computing, we note that the only thing to be changed to get the quantum model is to modify the oracle in \cref{eq:oracle} to support queries in quantum superposition. This is done in the usual way, by defining a unitary implementation of the oracle, which allows quantum algorithms to make superposition queries and potentially solve the problem more efficiently than randomized algorithms.

 	\section{High level overview}
Let us first consider the lower bound against randomized algorithms. Let us also first look at the special setting of $p=0$ where we still assume access to the gradient (or $p=1$) oracle. To be more precise, the oracle returns subgradients, since gradients need not be defined at all points for Lipschitz convex functions.
For this setting, known popularly as \emph{nonsmooth} convex optimization, the optimal lower bound of ${\Omega}\left(\frac{1}{\epsilon^2}\right)$ is in fact a classical result~\cite{nemirovsky1983problem}. 
The proof of this result is very elegant and has been used subsequently to prove several other related lower bounds such as for parallel randomized algorithms~\cite{nemirovski1994parallel}, quantum algorithms~\cite{GKNS21} etc. Since our proof builds on this framework, we now review this.

\para{Nonsmooth lower bound instance.} The lower bound instance for nonsmooth convex optimization is $\min_{\norm{x}\leq 1} f_V(x)$ where $f_V:\R^n \to \R$ is chosen as
\begin{align}\label{eqn:nonsmooth}
	f_V(x) = \max_{i \in [k]} \iprod{v_i}{x} + (k-i) \gamma,
\end{align}
with $k= O\left(n^{1/3}\right)$, $\gamma = \tilde{\Theta}\left(\frac{1}{\sqrt{n}}\right)$, and $V=\left(v_1,\cdots,v_k\right)$ comprises  $k$ orthonormal vectors chosen uniformly at random. The argument essentially shows that
\begin{enumerate}[label=(\roman*)]
	\item in order to find a $O\left(\frac{1}{\sqrt{k}}\right)$-approximate minimizer, one needs to know all the $v_i$'s, and
	\item with high probability, each query reveals at most one new vector $v_i$.
\end{enumerate}
This yields a lower bound of $k$ queries for achieving an error of $O\left(\frac{1}{\sqrt{k}}\right)$. Since $f$ is a $1$-Lipschitz function, when we rewrite this bound in terms of error, it yields the $\Omega\left(\frac{1}{\epsilon^2}\right)$ randomized lower bound.

For the $p=1$ setting, known popularly as \emph{smooth} convex optimization, the optimal lower bound $\Omega\left(\sqrt{\frac{1}{\epsilon}}\right)$ is also a classical result originally proven in~\cite{nemirovsky1983problem}. However, the proof of this result in~\cite{nemirovsky1983problem} is quite complicated and is not widely known. The recent papers of~\cite{guzman2015lower,diakonikolas2020lower} provide a much simpler proof of the $p=1$ result by using the lower bound construction for $p=0$ setting described above and using \emph{smoothing}, which we now review.

\para{Smoothing.} Smoothing refers to the process of approximating a given Lipschitz function $f$ by another function $g$, which has Lipschitz continuous $p^\textrm{th}$ derivatives for some $p \geq 1$. Further, since we will be applying this operation to~\eqref{eqn:nonsmooth}, we will describe smoothing in this context.
\begin{definition}\label{def:smoothing}
	An operator $\S$ which takes a $1$-Lipschitz function $f$ 
	to another convex function $\S[f]$ is called a $(p,\beta,\eps)$-smoothing operation if it satisfies the following:
	\begin{enumerate}
		\item	\emph{Smoothness}: $p^\textrm{th}$ derivatives of $\S[f]$ are Lipschitz continuous with parameter $\beta$, and
		\item	\emph{Approximation}: For any $x$, we have $\abs{f(x)-\S[f](x)} \leq \eps$.
	\end{enumerate}
\end{definition}
If we can design a smoothing operation as per the above definition with $\eps=O\left(1/\sqrt{k}\right)$ and further ensure that property (ii) above i.e., \emph{with high probability, each query to the first $p$ derivatives of $\S[f]$ reveals at most one new vector $v_i$}, then the proof strategy of lower bound for nonsmooth convex optimization can be executed on the smoothed instance $\S[f]$, there by giving us a lower bound for $p^{\textrm{th}}$-order smooth convex optimization. This is the key idea of~\cite{guzman2015lower,diakonikolas2020lower}. Further, the smaller $\beta$ is, the better the bound we obtain.
However, since $f$ can have discontinuous $p^{\textrm{th}}$ derivatives, there is a tension between the approximation property which tries to keep $\S[f]$ close to $f$ and the smoothness property. So, one cannot make $\beta$ very small after fixing $\eps = O(1/\sqrt{k})$. For the rest of this section, we fix $\eps = O(1/\sqrt{k})$ in \Cref{def:smoothing}.

For the $p=1$ setting, there is a well-known smoothing operation known as \emph{Moreau/inf-conv} smoothing~\cite{Bauschke2011}, which obtains the best possible smoothing with $\beta = \Theta(k^{1.5})$. This gives the tight query lower bound of $\Omega\left({\frac{1}{\sqrt{\epsilon}}}\right)$ for smooth convex optimization. 

However, there is no known generalization of inf-conv smoothing for $p \geq 2$, so one needs to use a different smoothing operator to extend this proof strategy for proving query lower bounds for higher order smooth convex optimization. Given any $p\geq 1$,~\cite{agarwal2018lower} indeed construct such a smoothing, called \emph{randomized smoothing} which maps Lipschitz convex functions to convex functions with Lipschitz $p^{\textrm{th}}$ derivatives. In the general $p \geq 1$ setting, a smoothing operator with $\beta = O\left(k^{3p/2}\right)$ would 
give the optimal lower bound of $\Omega\Bigl({\epsilon}^{\frac{-2}{3p+1}}\Bigr)$. However, the randomized smoothing of~\cite{agarwal2018lower} can obtain only $\beta = O\left(k^{5p/2}\right)$ leading to a suboptimal $\Omega\Bigl(\eps^{\frac{-2}{5p+1}}\Bigr)$ lower bound for $p^{\textrm{th}}$ order smooth convex optimization.

We design an improved smoothing operation for the specific class of functions in \cref{eqn:nonsmooth}, with the optimal $\beta = O\left(k^{3p/2}\right)$ using two key ideas. The first idea is the \emph{softmax} function with parameter $\rho$ defined as $\smax_{\rho}(z)\defeq \rho \log\left(\sum_{i \in [k]} \exp\left(\frac{z_i}{\rho}\right)\right)$, where $z \in \R^k$. If we apply $\smax_{\rho}$, with $\rho = k^{-3/2}$, to functions of the form~\eqref{eqn:nonsmooth} through:
\begin{align}
	\smax_\rho(\aff_V(x)) \defeq \rho \log\left(\sum_{i\in[k]} \exp\left(\frac{\iprod{v_i}{x} + (k-i)\gamma}{\rho}\right)\right),
\end{align}
where $\aff_V(x) \defeq (\dotp{v_1}{x} + (k-1)\gamma,\dotp{v_2}{x} + (k-2)\gamma, \dots, \dotp{v_k}{x})$, we can show that $\smax_{\rho}(\aff_V(x))$
satisfies \Cref{def:smoothing} with the optimal value of $\beta = {O}\left(k^{3p/2}\right)$. However, any query on derivatives of $\smax_\rho(\aff_V(x))$ reveals information about all the vectors $v_i$  simultaneously since for instance the gradient is given by
\begin{equation}
	\nabla \smax_{\rho}(\aff_V(x)) = \sum_{i\in[k]} \frac{\exp\left(\frac{\iprod{v_i}{x} + (k-i)\gamma}{\rho}\right)}{ \sum_{j \in [k]} \exp\left(\frac{\iprod{v_j}{x} + (k-j)\gamma}{\rho}\right)} \cdot v_i.	
\end{equation}
Consequently, it cannot be directly used to obtain a lower bound.
The second idea is that even though the function value and derivatives of $\smax_{\rho}(\aff_V(x))$ have contribution from all $v_i$'s, the contribution is heavily dominated (i.e., up to $\frac{1}{\poly(k)}$ error) by $v_{i^*(x)}$, where $i^*(x) = \argmax_{i \in [k]} \iprod{v_i}{x} + (k-i)\gamma$, whenever $\iprod{v_{i^*(x)}}{x} + (k-{i^*(x)})\gamma > \iprod{v_i}{x} + (k-i)\gamma + \Omega(\rho \log k)$ for every $i \neq i^*(x)$.

Based on this insight, we design a new $1$-Lipschitz convex function given by $h(x) \defeq \max_{i \in [k]} f_i(x)$ where $f_i(x) \defeq \smax^{\leq i}_{\rho}(\aff_V(x)) + \rho(k-i)n^{-\alpha}$ for an appropriate $\alpha$ to be chosen later, where $\smax^{\leq i}_{\rho}(\aff_V(x)) \defeq \rho \log\left(\sum_{j\in[i]} \exp\left(\frac{\iprod{v_j}{x} + (k-j)\gamma}{\rho}\right)\right)$. The key property satisfied by $f_i$ is that $f_i \approx f_j$ implies that $\nabla f_i \approx \nabla f_j$ for any $i,j$. This implies that near points of discontinuous gradients for $h$ i.e., points where $\argmax_{i \in [k]} f_i(x)$ changes, the resulting discontinuity in $\nabla h(x)$ is $O\left(\frac{1}{\poly(k)}\right)$. In contrast, the change in gradients of the original instance $f_V(x)$ near points of discontinuity is $\Omega(1)$. If we apply randomized smoothing to $h$, the resulting function can then be shown to have $p^\textrm{th}$ order Lipschitz constant $\widetilde{O}\left(k^{3p/2}\right)$. The precise details, proved in \Cref{lem:gsmoothness}, are technical and form the bulk of this paper.
The same proof strategy immediately yields the same bound on the number of \emph{rounds} for \emph{parallel} randomized algorithms as long as the number of queries in each round is at most $\poly(k)$. The reason is that $\poly(k)$ queries are still not sufficient to obtain information about more than one vector per round. Finally, the same proof strategy can be adapted to the quantum setting using the \emph{hybrid argument}~\cite{BBBV97}. See \Cref{sec:infhiding} for more details.
 	\section{Smoothing preliminaries}\label{sec:smoothing}

In this section we look at some smoothing functions and their properties. The proofs of these properties can be found in \Cref{sec:prelims}.

Let $B_{\eta}(x)=\{y \in \R^n: \norm{y-x}\leq \eta\}$ be the ball of radius $\eta$ around $x \in \R^n$.

\begin{definition}[Randomized smoothing]
    For any function $f:\R^n \rightarrow \R$ and real-valued $\eta > 0$, the randomized smoothing operator $S_\eta$ produces a new function $S_{\eta}[f]:\R^n \rightarrow \R$ from $f$ with the same domain and range, defined as 
    \begin{equation}
        S_{\eta}[f](x) = \E_{y \in B_{\eta}(x)}[f(y)].    
    \end{equation}      
\end{definition}

This smoothing turns non-smooth functions into smooth functions. If we start with a function $f$ that is Lipschitz, then after randomized smoothing, the resulting function's first derivative will be defined and Lipschitz~\cite{agarwal2018lower}. Since we want to construct functions with $p$ derivatives, we define a $p$-fold version of randomized smoothing.
Recall that $p$ is the same $p$ as in the introduction (i.e., we are proving lower bounds on the $p$th order optimization problem). This operation also depends on a parameter $\beta$ that we will fix later.

\begin{restatable}[Smoothing]{definition}{smoothing}
    The smoothing operator $\S$ on input $f:\R^n \to \R$ outputs the function 
    \begin{equation}
        \S[f] = S_{\beta/2^{p}}[S_{\beta/2^{p-1}}[\cdots S_{\beta/2^{2}}[S_{\beta/2^{1}}[f]]\cdots]].
    \end{equation}
\end{restatable}

The main properties we require from this smoothing are as follows.

\begin{restatable}{lemma}{smoothingproperties}
    \label{lem:smoothing}
    For the smoothing operator $\S$ defined above, the following statements hold true.
    \begin{enumerate}
        \item For any functions $f,g : \R^n \rightarrow \R$ for which $\S[f], \S[g]$ are well-defined, $\S[f+g] = \S[f] + \S[g]$.
        \item The value $\S[f](x)$ only depends on the values of $f$ within a $(1-2^{-p})\beta$ radius of $x$.
        \item The gradient and higher order derivatives of $\S[f]$ at $x$ depend only on the values of $f$ within $B_{\beta}(x)$.
        \item If $\nabla^p f$ is $L$-Lipschitz in a ball of radius $\beta$ around $x$, then $\nabla^p \S[f]$ is also $L$-Lipschitz at $x$.
        \item Let $f$ be $G$-Lipschitz in a ball of radius $\beta$ around $x$. Then $\S[f]$ is $p$-times differentiable, and for any $i \leq p$, $\nabla^i \S[f]$ is $L$-Lipschitz in a $\beta/2^p$-ball around $x$ with $L \leq \frac{n^i 2^{i(i+1)/2}}{\beta^i} G$.
        \item Let $f$ be $G$-Lipschitz in a ball of radius $\beta$ around $x$. Then $\abs{\S[f](x) - f(x)} \leq \beta G$.
        \item If $f$ is a convex function, then $\S[f]$ is also a convex function.
    \end{enumerate}
\end{restatable}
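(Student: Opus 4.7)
I plan to prove all seven properties starting from the explicit iterated representation
\[
  \S[f](x) \;=\; \E_{y_1 \in B_{\eta_1}(0)} \E_{y_2 \in B_{\eta_2}(0)} \cdots \E_{y_p \in B_{\eta_p}(0)}\bigl[f(x + y_1 + \cdots + y_p)\bigr],
\]
where $\eta_i \defeq \beta/2^i$. Properties (1) and (7) are immediate: linearity of expectation gives $\S[f+g]=\S[f]+\S[g]$, and averaging convex functions along a translation-invariant distribution preserves convexity, so each layer $S_{\eta_i}$ and hence $\S$ preserves convexity. Property (2) follows because the shift $y_1+\cdots+y_p$ has norm at most $\sum_i \eta_i = (1-2^{-p})\beta$, so $\S[f](x)$ never queries $f$ outside $B_{(1-2^{-p})\beta}(x)$. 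Property (3) follows from the same reasoning, since differentiation at $x$ looks at $\S[f]$ only in an arbitrarily small neighborhood of $x$, and hence at $f$ only in a ball of radius $(1-2^{-p})\beta + \varepsilon \le \beta$ for small enough $\varepsilon>0$. Property (6) combines (2) with Lipschitzness: $|\S[f](x)-f(x)| \le \E_Y[|f(x+Y)-f(x)|] \le G\cdot\E_Y[\|Y\|] \le \beta G$. Property (4) is also clean: if $\nabla^p f$ already exists and is $L$-Lipschitz on $B_\beta(x)$, then by property (3) we may differentiate under all $p$ integrals to obtain $\nabla^p \S[f] = \S[\nabla^p f]$ in a neighborhood of $x$, and an average of $L$-Lipschitz maps is $L$-Lipschitz.

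\textbf{The hard property (5).} The substantive work is bounding the Lipschitz constant of $\nabla^i \S[f]$ when $f$ is only $G$-Lipschitz. View $\S$ as convolution with $\mu_p * \cdots * \mu_1$, where $\mu_j$ is the normalized indicator of $B_{\eta_j}(0)$. The core device is the divergence-theorem identity
\[
  \partial_k S_\eta[g](x) \;=\; \frac{n}{\eta}\,\E_{y \sim \mathrm{Unif}(\partial B_\eta(x))}\bigl[g(y)\,\hat n_k(y)\bigr],
\]
which absorbs one derivative into a smoothing layer of radius $\eta$ at the cost of a factor $n/\eta$, and requires only continuity of $g$. To control $\nabla^i \S[f]$ I push exactly $i$ derivatives into the $i$ smoothing layers with the \emph{largest} radii $\eta_1=\beta/2,\ldots,\eta_i=\beta/2^i$ (one derivative per layer) and leave the remaining $p-i$ layers as volume averages. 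This yields a representation $\nabla^i \S[f](x) = \int K(y)\,f(x+y)\,dy$ for a vector-valued kernel $K$ supported in $B_\beta(0)$ whose total variation is at most $\prod_{j=1}^i (n/\eta_j) = n^i\,2^{1+2+\cdots+i}/\beta^i = n^i\,2^{i(i+1)/2}/\beta^i$. The Lipschitz bound then follows by writing
\[
  \nabla^i \S[f](x) - \nabla^i \S[f](x') \;=\; \int K(y)\,\bigl(f(x+y) - f(x'+y)\bigr)\,dy,
\]
bounding each integrand by $G\|x-x'\|$ via the $G$-Lipschitzness of $f$ on $B_\beta$, and integrating. The restriction to a $\beta/2^p$-ball around the reference point is exactly what keeps all evaluation points $x+y, x'+y$ inside the region where $f$ is guaranteed Lipschitz.

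\textbf{Main obstacle.} The subtle bookkeeping is twofold: (i) justifying the derivative pushes when $f$ is only differentiable almost everywhere, and (ii) obtaining the tight constant $2^{i(i+1)/2}$ by routing derivatives into the \emph{largest}-radius layers rather than the smallest. For (i), I plan to use that $\mu_p * \cdots * \mu_1$ is a sufficiently smooth mollifier (each extra ball-indicator convolution gains regularity), so that $\nabla^i \S[f] = \bigl(\nabla^i (\mu_p*\cdots*\mu_1)\bigr) * f$ holds classically. For (ii), any other placement of derivatives among the $p$ layers would contribute a strictly larger product of factors $n/\eta_j$, so the choice of the outer $i$ layers is forced by optimization of the constant. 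Once both points are nailed down, the stated bound $L \le n^i\,2^{i(i+1)/2} G/\beta^i$ drops out directly.
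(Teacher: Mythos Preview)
Your proposal is correct and follows essentially the same approach as the paper: properties (1), (2), (3), (6), (7) are handled identically, and for the substantive items (4) and (5) the paper defers to \cite{agarwal2018lower}, whose core device is precisely the divergence/Stokes identity you write down (absorbing one derivative per ball-averaging layer at cost $n/\eta$) applied inductively through the innermost, largest-radius layers $\eta_1,\ldots,\eta_i$ to produce the constant $n^i 2^{i(i+1)/2}/\beta^i$. Your kernel/total-variation packaging is a slightly more explicit way to phrase the same induction, but the underlying argument and the bookkeeping (including why the $\beta/2^p$-neighborhood restriction keeps all evaluation points inside $B_\beta(x)$) coincide with the paper's.
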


We also use the softmax function introduced earlier.

\begin{restatable}[Softmax]{definition}{softmax}
    For a real number $\rho$, the softmax function $\smax_{\rho}: \R^n \rightarrow \R$ is defined as \begin{equation}
        \smax_{\rho}(x) = \rho \ln\Bigl(\sum_{i \in [n]} \exp(x_i/\rho)\Bigr).  
    \end{equation}         
    Let us also define, for $m \leq n$, $\smax^{\leq m}_{\rho}: \R^n \rightarrow \R$ as 
    \begin{equation}
        \smax^{\leq m}_{\rho}(x) = \smax_{\rho}(x_{\leq m}) \text{, or equivalently, } \smax^{\leq m}_{\rho}(x) = \rho \ln\Bigl(\sum_{i \in [m]} \exp(x_i/\rho)\Bigr). 
    \end{equation}
\end{restatable}

We note the following smoothness properties of softmax.

\begin{restatable}{lemma}{softmaxproperties}
    \label{lem:smaxsmoothness}
    The following are true of the function $\smax_{\rho}$ for any $\rho>0$.
    \begin{enumerate}
        \item The first derivative of $\smax$ can be computed as \begin{equation} \frac{\partial \smax_{\rho}(x)}{\partial x_i} = \frac{\exp(x_i/\rho)}{\sum_i \exp(x_i/\rho)}. \end{equation}
        \item $\smax_{\rho}$ is $1$-Lipschitz and convex.
        \item The higher-order derivatives of $\smax$ satisfy \begin{equation} \| \nabla^p \smax_{\rho}(x) - \nabla^p \smax_{\rho}(y) \| \leq \frac{\left(\frac{p+1}{\ln(p+2)}\right)^{p+1} p!}{\rho^{p}} \|x-y\|. \end{equation}
    \end{enumerate}
\end{restatable}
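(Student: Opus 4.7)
The plan is to handle the three claims in sequence, with the bulk of the work concentrated in claim~3. For claim~1, just differentiate $\smax_\rho(x)=\rho\ln\sum_j e^{x_j/\rho}$ with the chain rule: the outer factor of $\rho$ cancels the $1/\rho$ from differentiating the exponent, yielding the stated expression. For claim~2, write $p_i(x) := e^{x_i/\rho}/\sum_j e^{x_j/\rho}$, so $\nabla\smax_\rho(x)=p(x)$ lies in the probability simplex and
\begin{equation}
    \|\nabla\smax_\rho(x)\|_2^2 = \sum_i p_i(x)^2 \leq \sum_i p_i(x) = 1,
\end{equation}
proving $1$-Lipschitzness. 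Differentiating once more yields the Hessian $\tfrac{1}{\rho}(\mathrm{diag}(p(x))-p(x)p(x)^\top)$, which is $\tfrac{1}{\rho}$ times the covariance matrix of the random one-hot vector distributed as $p(x)$ and is therefore PSD; convexity follows.

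For claim~3, I would first rescale: with $g(y) := \ln\sum_i e^{y_i}$ we have $\smax_\rho(x)=\rho\, g(x/\rho)$ and $\nabla^p\smax_\rho(x)=\rho^{1-p}\nabla^p g(x/\rho)$, so it suffices to bound the multilinear operator norm of $\nabla^{p+1}g(y)$ uniformly in $y$ by $C := ((p+1)/\ln(p+2))^{p+1}\,p!$; integrating $\nabla^{p+1}g$ along the segment from $y$ to $y'$ then gives Lipschitzness of $\nabla^p g$ with constant $C$, and undoing the rescaling introduces the required factor $\rho^{-p}$. The crucial structural observation is that $g$ is the log-partition function of a categorical exponential family, so its higher derivatives are joint cumulants: for any vectors $v_1,\ldots,v_{p+1}$,
\begin{equation}
    \nabla^{p+1}g(y)[v_1,\ldots,v_{p+1}] = \kappa(V_1,\ldots,V_{p+1}),
\end{equation}
where $V_j$ is the scalar random variable taking value $(v_j)_i$ with probability $p_i(y)$. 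This identity follows by expanding $g\bigl(y+\sum_j t_j v_j\bigr) - \ln\sum_i e^{y_i}$ as the joint cumulant generating function of $(V_1,\ldots,V_{p+1})$ and reading off mixed partials at $t=0$.

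For unit $v_j$, we have $|V_j|\leq\|v_j\|_\infty\leq 1$, so every joint moment $\E\!\left[\prod_{j\in B} V_j\right]$ is bounded in absolute value by $1$. Applying the moment-to-cumulant formula
\begin{equation}
    \kappa(V_1,\ldots,V_{p+1}) = \sum_{\pi} (-1)^{|\pi|-1}(|\pi|-1)! \prod_{B\in\pi}\E\!\left[\prod_{j\in B} V_j\right],
\end{equation}
where the sum ranges over partitions $\pi$ of $\{1,\ldots,p+1\}$, and bounding each $(|\pi|-1)!$ by $p!$, one obtains $|\kappa(V_1,\ldots,V_{p+1})|\leq p!\cdot B_{p+1}$, with $B_{p+1}$ the Bell number. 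The classical estimate $B_n\leq (n/\ln(n+1))^n$ then produces exactly the constant $((p+1)/\ln(p+2))^{p+1}\,p!$. The main obstacle is cleanly establishing the cumulant identification and the Taylor expansion step; once that is in place, the rest collapses to routine set-partition combinatorics, and the precise match with the Bell-number inequality is strong evidence that this is the intended route.
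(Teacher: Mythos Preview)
Your arguments for items~1 and~2 are essentially identical to the paper's: the gradient is computed by the chain rule, $1$-Lipschitzness follows from $\|p(x)\|_2\leq\|p(x)\|_1=1$, and convexity from the Hessian $\tfrac{1}{\rho}(\mathrm{diag}(p)-pp^\top)$ being PSD (the paper verifies this via Cauchy--Schwarz, you via the covariance interpretation; these are the same computation).

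For item~3 the paper does not give a proof at all: it simply cites \cite[Theorem~7]{bullins20highlysmooth}. Your proposal goes further and supplies a self-contained argument, and the argument is correct. The identification of $\nabla^{p+1}g(y)[v_1,\ldots,v_{p+1}]$ with the joint cumulant $\kappa(V_1,\ldots,V_{p+1})$ is the standard fact that derivatives of the log-partition function are cumulants, and your derivation via the joint CGF $K(t)=g\bigl(y+\sum_j t_j v_j\bigr)-g(y)$ is exactly the clean way to see it. The bound $|V_j|\leq\|v_j\|_\infty\leq\|v_j\|_2=1$, the moment--cumulant formula, the estimate $(|\pi|-1)!\leq p!$, and the Bell-number inequality $B_{p+1}\leq\bigl((p+1)/\ln(p+2)\bigr)^{p+1}$ combine to give precisely the stated constant; the rescaling and the mean-value step recovering the $\rho^{-p}$ factor are routine. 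So your treatment of item~3 is strictly more informative than the paper's, which outsources this point entirely to the citation.
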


We will also need the following lemma which roughly states that if $\smax(x)$ and $\smax^{\leq m}(x)$ are nearly the same, then their gradients are also nearly the same at $x$.

\begin{restatable}{lemma}{softmaxgradients}
    \label{lem:nearlysamesmax}
    Let $x \in \R^n$ and $m<n$. If \begin{equation} \frac{\smax_{\rho}(x) - \smax^{\leq m}_{\rho}(x)}{\rho} = \delta < 1, \end{equation} Then 
    \begin{equation} \| \nabla \smax_{\rho}(x) - \nabla \smax^{\leq m}_{\rho}(x) \| \leq 4\delta. \end{equation} 
\end{restatable}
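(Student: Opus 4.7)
The plan is to write each gradient as a normalized vector of exponentials (a probability distribution on $[n]$, with the truncated softmax supported on $[m]$), translate the hypothesis on $\smax_{\rho}(x)-\smax^{\leq m}_{\rho}(x)$ into a multiplicative relation between the two normalization constants, and then bound the difference of the two probability vectors first in $\ell_1$ and finally in $\ell_2$.

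Concretely, using \Cref{lem:smaxsmoothness}(1), I would set
\[
A \defeq \sum_{i \in [n]} \exp(x_i/\rho), \qquad B \defeq \sum_{i \in [m]} \exp(x_i/\rho),
\]
so that the $i$-th coordinates of the two gradients are $p_i \defeq \exp(x_i/\rho)/A$ for $i \in [n]$ and $q_i \defeq \exp(x_i/\rho)/B$ for $i \in [m]$, with $q_i = 0$ for $i > m$. The hypothesis $\smax_{\rho}(x) - \smax^{\leq m}_{\rho}(x) = \rho\delta$ rewrites as $\ln(A/B) = \delta$, i.e.\ $A/B = e^{\delta}$; consequently $q_i = e^{\delta} p_i$ for every $i \in [m]$, and $\sum_{i \in [m]} p_i = B/A = e^{-\delta}$, while $\sum_{i > m} p_i = 1 - e^{-\delta}$.

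Next I would compute the $\ell_1$ distance directly:
\[
\sum_{i \in [n]} |p_i - q_i| \;=\; (e^{\delta}-1)\sum_{i \in [m]} p_i \;+\; \sum_{i > m} p_i \;=\; (e^{\delta}-1)e^{-\delta} + (1-e^{-\delta}) \;=\; 2(1-e^{-\delta}).
\]
Since $\delta \geq 0$, we have $1 - e^{-\delta} \leq \delta$, so the $\ell_1$ distance is at most $2\delta$. Using $\|\cdot\|_2 \leq \|\cdot\|_1$, we obtain $\|\nabla\smax_{\rho}(x) - \nabla\smax^{\leq m}_{\rho}(x)\| \leq 2\delta \leq 4\delta$, which is the claimed bound (actually with a better constant — the paper's statement of $4\delta$ is simply a loose but more convenient form).

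There is no real obstacle here: the lemma is essentially a direct calculation once one observes that the two gradients are (differently) normalized versions of the same vector of exponentials restricted to overlapping index sets. The only mild subtlety is the accounting split between indices $i \leq m$ (where $p_i$ and $q_i$ differ only by the multiplicative factor $e^{\delta}$) and indices $i > m$ (where $q_i = 0$); both contributions turn out to equal $1 - e^{-\delta}$, which explains the factor of $2$ and ultimately the bound linear in $\delta$.
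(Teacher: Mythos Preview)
Your proof is correct and follows essentially the same route as the paper: both express the two gradients as normalized exponential vectors, take their difference, and bound the $\ell_2$ norm via the $\ell_1$ norm. The only difference is bookkeeping---the paper parameterizes by $c = A/B - 1$ (so $\delta = \ln(1+c)$) and reaches the intermediate bound $\tfrac{2c}{1+c}$, which equals your $2(1-e^{-\delta})$, before loosening it to $2c \leq 4\delta$; your direct use of $1-e^{-\delta}\leq\delta$ simply yields the sharper constant $2\delta$.
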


\section{Function construction and properties}\label{sec:func-props}

In this section we define the class of functions used in our randomized (and quantum) lower bound, and state the properties of the function that will be exploited in the lower bound.

Let $k \in \N, \gamma, \beta, \rho, \alpha \in \R$ be parameters to be defined shortly. $k$ and $\gamma$ are parameters as used in the high level overview~(\cref{eqn:nonsmooth}), $\beta$ is the parameter required to define $\S$ and $\rho$ is the parameter used in the definition of $\smax$.

\para{Function construction.}
Given a list of orthonormal vectors $v_1, \dots, v_k \in \R^n$, which we collectively call $V$, we recall  that $\aff_V(x) \in \R^k$ denotes the vector 
\begin{equation}
    \aff_V(x) = (\dotp{v_1}{x} + (k-1)\gamma,\dotp{v_2}{x} + (k-2)\gamma, \dots, \dotp{v_k}{x}).
\end{equation}
We can now define our hard function class as follows.

\begin{definition}
    Let $V = (v_1, \dots, v_k) \in \R^n$ be a set of orthonormal vectors. The functions $f_1$, \dots, $f_k$, $h$ and $g$ depend on $V$ as follows.
    Define, for each $i \in [k]$, the function $f_i: \R^n \rightarrow \R$ as \begin{equation} f_{i}(x) = \smax^{\leq i}_{\rho}(\aff_V(x)) + \rho(k-i)n^{-\alpha}. \end{equation}
    Define $h(x) = \max_{i \in [k]} f_i(x)$, and $g(x) = \S[h](x)$.
\end{definition}

Note that in the above definition we apply the $\smax$ functions on $\aff_V(x)$ and not on $x$. However, since $\aff_V(x)$ is obtained by applying a unitary transform on $x$ and then translating it, the observations about $\smax(x)$ in~\Cref{lem:smaxsmoothness,lem:nearlysamesmax} also hold for $\smax(\aff_V(x))$.

We set $\gamma = 40\sqrt{\frac{\ln n}{n}}$, $k = \floor{(0.1/\gamma)^{2/3}}$ (or $\gamma \approx 0.1/k\sqrt{k}$), $\rho = \gamma/100 \alpha\ln n, \beta = \gamma/\ln n, \alpha = p+1$.

\para{Function properties.} We now state some properties of the function that will be used to show the lower bounds.

\begin{restatable}{lemma}{smoothnesslemma}
    \label{lem:gsmoothness}
    For any choice of $V$, the function $g$ is convex, $p$-times differentiable and satisfies
    \begin{equation}
        \| \nabla^p g(x) - \nabla^p g(y) \| \leq L_p \|x-y\|
    \end{equation}
    where $L_p \leq O_p(k^{3p/2} (\ln k)^{p})$.
\end{restatable}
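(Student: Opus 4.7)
First, I would dispatch the easier parts. Each $f_i = \smax^{\leq i}_\rho \circ \aff_V + \rho(k-i)n^{-\alpha}$ is convex, since softmax is convex by \Cref{lem:smaxsmoothness}, composed with an affine map and shifted by a constant; hence $h = \max_i f_i$ is convex and \Cref{lem:smoothing}(7) gives $g = \S[h]$ convex. Since $\nabla f_i$ is always a convex combination of the orthonormal $v_j$, each $f_i$ and hence $h$ is $1$-Lipschitz, and \Cref{lem:smoothing}(5) then implies $g$ is $p$-times differentiable.

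For the Lipschitz bound on $\nabla^p g$, the naive application of \Cref{lem:smoothing}(5) with $G=1$ only gives $O_p(n^p/\beta^p)$, which is far too large. Instead, at each evaluation point $x$ I would set $\ell := \argmax_i f_i(x)$ and use the local decomposition $h = f_\ell + u$ with $u := h - f_\ell \geq 0$; by linearity (\Cref{lem:smoothing}(1)), $g = \S[f_\ell] + \S[u]$. For the smooth piece, \Cref{lem:smaxsmoothness}(3) gives that $\nabla^p \smax^{\leq \ell}_\rho$ is $O_p(1/\rho^p)$-Lipschitz, and composition with $\aff_V$ (whose linear part has operator norm $1$ because the $v_j$ are orthonormal) preserves this Lipschitz constant for $\nabla^p f_\ell$; \Cref{lem:smoothing}(4) then transfers it through $\S$, so $\nabla^p \S[f_\ell]$ is $O_p(1/\rho^p)$-Lipschitz. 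With the chosen parameters $\rho = \Theta_p(\gamma/\ln n) = \Theta_p(1/(k^{3/2}\ln n))$ this equals $O_p(k^{3p/2}(\ln n)^p)$, which matches the target after noting $\ln n = O(\ln k)$ from the polynomial relation $n \sim k^3 \ln n$.

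The technical heart of the proof is bounding the Lipschitz constant of $\nabla^p \S[u]$. The key claim I would establish is that $u(y) \leq O(\rho k n^{-\alpha})$ uniformly on $B_\beta(x)$, which is vastly stronger than the $O(\beta)$ bound available just from Lipschitzness. I would prove it by bounding $f_m(y) - f_\ell(y)$ for each $m$ by cases. If $m < \ell$, the pointwise inequality $\smax^{\leq m} \leq \smax^{\leq \ell}$ (fewer exponential terms) combined with the affine shift immediately gives $f_m - f_\ell \leq \rho(\ell - m)n^{-\alpha} \leq \rho k n^{-\alpha}$ globally. If $m > \ell$, the argmax condition at $x$ rearranges to $(\smax^{\leq m}(x) - \smax^{\leq \ell}(x))/\rho \leq (m-\ell)n^{-\alpha} \ll 1$, so \Cref{lem:nearlysamesmax} bounds the gradient of $(\smax^{\leq m} - \smax^{\leq \ell})/\rho$ at $x$ by four times this ratio; a Gronwall-type propagation along the segment $[x,y]$ then keeps this ratio at $O(k n^{-\alpha})$ throughout $B_\beta(x)$, yielding $f_m(y) - f_\ell(y) \leq O(\rho k n^{-\alpha})$. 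Taking the max over $m$ establishes the claim.

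With $u$ pointwise $O(\rho k n^{-\alpha})$-small on $B_\beta(x)$, iterated smoothing estimates that exploit this $L^\infty$ bound (each layer of $\S$ upgrades an $L^\infty$ bound into a bound on one more derivative, losing a factor $O(n/\beta)$ per step via the surface-integral form of $\nabla S_\eta$) produce a Lipschitz constant for $\nabla^p \S[u]$ of order $O_p(n^{p+1}/\beta^{p+1} \cdot \rho k n^{-\alpha})$; substituting $\alpha = p+1$, $\rho = \Theta_p(\beta)$, and $\beta^{-p} = O(k^{3p/2}(\ln n)^p)$ collapses this (again up to $(\ln k)^p$-type factors via $\ln n = O(\ln k)$) to the same order $O_p(k^{3p/2}(\ln k)^p)$. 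Adding the two contributions gives the claimed $L_p$. The main obstacle I foresee is the uniform Gronwall propagation: one must verify, uniformly in every $m > \ell$ and every $y \in B_\beta(x)$, that the ratio $(\smax^{\leq m}(y) - \smax^{\leq \ell}(y))/\rho$ stays $\ll 1$ along the entire segment from $x$ to $y$ so that \Cref{lem:nearlysamesmax} remains applicable, which amounts to checking that the exponential factor $e^{O(\beta)}$ arising from Gronwall is $O(1)$ for the parameter choices used.
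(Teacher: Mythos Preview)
Your convexity, differentiability, and treatment of the smooth piece $\S[f_\ell]$ are correct and essentially match the paper. The gap is in the final step for $\S[u]$.

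You bound $u$ in $L^\infty$ by $O(\rho k n^{-\alpha})$ on $B_\beta(x)$ and then assert that the $p$-fold smoothing $\S$ turns this into an $O_p\bigl((n/\beta)^{p+1}\cdot \rho k n^{-\alpha}\bigr)$ Lipschitz constant for $\nabla^p\S[u]$. This cannot be obtained: each layer $S_\eta$ gains exactly one order of regularity (bounded $\Rightarrow$ Lipschitz via the surface-integral identity, Lipschitz $\Rightarrow$ $\nabla$ Lipschitz, etc.), so a $p$-fold smoothing of a merely bounded function yields $\nabla^{p-1}\S[u]$ Lipschitz, not $\nabla^p\S[u]$ Lipschitz. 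Your exponent $p+1$ tacitly uses $p+1$ layers, but $\S$ has only $p$. Moreover, even granting your formula, the arithmetic does not close: $(n/\beta)^{p+1}\rho k n^{-(p+1)} = \rho k/\beta^{p+1} = \Theta_p(k/\beta^p) = \Theta_p\bigl(k^{3p/2+1}(\ln n)^p\bigr)$, a factor of $k$ above the target.

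The repair is already contained in your own Gronwall argument. Via \Cref{lem:nearlysamesmax}, keeping the ratio $\delta(y)=(\smax^{\leq m}-\smax^{\leq \ell})/\rho$ at $O_p(kn^{-\alpha})$ on $B_\beta(x)$ simultaneously bounds the \emph{gradient} $\|\nabla(f_m-f_\ell)(y)\|=\|\nabla\smax^{\leq m}-\nabla\smax^{\leq \ell}\|\leq 4\delta(y)=O_p(kn^{-\alpha})$ (the $m<\ell$ case is similar, using that $m$ being active at $y$ forces $0\le (\smax^{\leq \ell}-\smax^{\leq m})/\rho \le (\ell-m)n^{-\alpha}$). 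Hence every subgradient of $u=h-f_\ell$ on $B_\beta(x)$ has norm $O_p(kn^{-\alpha})$, i.e.\ $u$ is $O_p(kn^{-\alpha})$-\emph{Lipschitz} there. Now item~5 of \Cref{lem:smoothing} applies directly with $G=O_p(kn^{-\alpha})$, giving $\nabla^p\S[u]$ an $O_p\bigl((n/\beta)^p\cdot kn^{-\alpha}\bigr)=O_p\bigl(k/(n\beta^p)\bigr)$ Lipschitz constant, which is dominated by the $\rho^{-p}$ term from the smooth piece. This is precisely the paper's route: it bounds the Lipschitz constant of the non-smooth part, not its sup norm. The paper also anchors the decomposition at the \emph{minimum} index $j$ active somewhere in $B_\beta(x)$ (so only $i>j$ enter and the $m<\ell$ case disappears) and replaces your Gronwall propagation by the one-line inequality $\ln(1+e^{4\beta/\rho}c)\le e^{4\beta/\rho}\ln(1+c)$, but these are cosmetic differences once you use the Lipschitz bound rather than the $L^\infty$ bound.
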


The proof relies on the fact that $\smax$ is smooth and hence each $f_i$ is smooth. If $h = f_i$ for a particular $i$ in a $\beta$-neighborhood of $x$, then $g = \S[h]$ would also be smooth  (by~\Cref{lem:smoothing}, item 4). If $h$ depends on multiple $f_i$s in a $\beta$-neighborhood of $x$, then we know that at least two softmax's involved in the definitions of the $f_i$s have nearly the same value in the neighborhood of $x$, and by~\Cref{lem:nearlysamesmax} they have nearly the same gradient. This makes $h$ \emph{nearly} smooth, which will allow us to say that $g$ is smooth at $x$ (by~\Cref{lem:smoothing}, item 5).

\begin{proof}[Proof of Lemma~\ref{lem:gsmoothness}]
    Each $f_i$ is an instance of softmax applied to $\aff_V(x)$ plus a constant. Since $\aff_V(x)$ is the vector $x$ transformed by a unitary and then translated, the smoothness and convexity properties of $\smax_{\rho}$ also apply to $f_i$. Hence each $f_i$ is convex, $p$-times differentiable and its $p$th derivatives are $O_p(\rho^{-p})$-Lipschitz (see \Cref{lem:smaxsmoothness}). The function $h$, being a maximum over convex functions, is also convex. By the properties of the smoothing operator $\S$ (\Cref{lem:smoothing}), the function $g= \S[h]$ is also convex.

    Let $x \in \R^n$. Let $j \in [k]$ be the minimum number such that there is a point $y \in B_{\beta}(x)$ for which $h(y) = f_j(y)$. We can rewrite $h$ as follows: $h(z) = f_j(z) + \max_{i>j} (f_i(z) - f_j(z))$. We call $f_j(z)$ the smooth term and $\max_{i>j} (f_i(z) - f_j(z))$ the non-smooth term. We know that $f_j$ has an $O_p(\rho^{-p})$ upper bound on the Lipschitzness of its $p$-th order derivatives. If all points $y \in B_{\beta}(x)$ satisfy $h(y) = f_j(y)$, then the non-smooth term is $0$ and so it does not change the smoothness of $h$. $g = \S[h]$ will maintain this smoothness (see item 4 of \Cref{lem:smoothing}).

    If the non-smooth term is non-zero at some point in $B_{\beta}(x)$, then we wish to show that the non-smooth term has a small Lipschitz constant in $B_{\beta}(x)$. This would imply, via item 5 of \Cref{lem:smoothing}, that the $p$-th order derivative of the smoothing of the non-smooth term with $\S$ would have a small Lipschitz constant. Towards this let $x'$ be any point in $B_{\beta}(x)$. Let $I_{x'}$ be the set $\{i \in [k] | h(x') = f_{i}(x')\}$. The set of subgradients of the non-smooth term at $x'$ is the convex hull of $\{\nabla (f_i - f_j)(x')\}_{i \in I(x')}$. So if we show that for an arbitrary $i \in I(x')$, $\| \nabla (f_i - f_j)(x') \| \leq L$, then we know that the non-smooth part is $L$-Lipschitz at $x'$. If $i=j$, then the gradient is zero. Let us take an $i \neq j$ (since $j$ is the smallest, in fact $i>j$).
    By convexity of the ball and the continuity of $f_i$ and $f_j$, there must be a point $y$ in $B_{\beta}(x)$ for which $h(y) = f_i(y) = f_j(y)$. Note that $x' \in B_{2\beta}(y)$. 
    
    The statement $f_i(y) = f_j(y)$ translates to
    \begin{align}
        \frac{\smax_{\rho}^{\leq i}(\aff_V(y)) - \smax_{\rho}^{\leq j}(\aff_V(y))}{\rho} = (i-j)n^{-\alpha} << 1.
    \end{align}

    Expanding the expression for $\smax_{\rho}^{\leq i}$ and $\smax_{\rho}^{\leq j}$ we get
    \begin{align}
    \frac{\smax_{\rho}^{\leq i}(\aff_V(y)) - \smax_{\rho}^{\leq j}(\aff_V(y))}{\rho} = &\ln\left(\frac{
        \sum_{\ell =1}^{i} \exp\left(
            \frac{\dotp{y}{v_{\ell}} + (k-\ell)\gamma}{\rho}
        \right)
    }{
        \sum_{\ell =1}^{j} \exp\left(
            \frac{\dotp{y}{v_{\ell}} + (k-\ell)\gamma}{\rho}
        \right)
    }\right)\\
        = &
    \ln\left(1 + \frac{
        \sum_{\ell = j+1}^{i} \exp\left(
            \frac{\dotp{y}{v_{\ell}} + (k-\ell)\gamma}{\rho}
        \right)
    }{
        \sum_{\ell =1}^{j} \exp\left(
            \frac{\dotp{y}{v_{\ell}} + (k-\ell)\gamma}{\rho}
        \right)
    }\right)
    \end{align}
    
    Since $\|x'-y\| \leq 2\beta$, we have that $\abs{\dotp{x'}{v} - \dotp{y}{v}} \leq 2\beta$ for any unit vector $v$. Hence
    \begin{equation}
        \frac{\smax_{\rho}^{\leq i}(\aff_V(x')) - \smax_{\rho}^{\leq j}(\aff_V(x'))}{\rho} \leq
        \ln\left(1 + \frac{
            e^{2\beta/\rho} \sum_{\ell = j+1}^{i} \exp\left(
                \frac{\dotp{y}{v_{\ell}} + (k-\ell)\gamma}{\rho}
            \right)
        }{
            e^{-2\beta/\rho} \sum_{\ell =1}^{j} \exp\left(
                \frac{\dotp{y}{v_{\ell}} + (k-\ell)\gamma}{\rho}
            \right)
        }\right).
    \end{equation}

    For all $c>0$, $\ln(1+e^{4\beta/\rho}c) \leq e^{4\beta/\rho} \ln(1+c)$. So we can conclude from the above that
    \begin{equation}
        \frac{\smax_{\rho}^{\leq i}(\aff_V(x')) - \smax_{\rho}^{\leq j}(\aff_V(x'))}{\rho} \leq (i-j)n^{-\alpha} e^{4\beta/\rho}.
    \end{equation}

    Now by~\Cref{lem:nearlysamesmax}, $\|\nabla (f_i - f_j)(x') \| = \| \nabla \smax^{\leq i}_{\rho}(\aff_V(x')) - \nabla \smax^{\leq j}_{\rho}(\aff_V(x')) \| \leq 4(i-j)n^{-\alpha} e^{4\beta/\rho}$.
    
    Hence the non-smooth part of $h$ is $4kn^{-\alpha}\exp(8\beta/\rho)$-Lipschitz in $B_{\beta}(x)$. The $p$th derivatives of $g = \S[h] = \S[f_j] + \S[\max_{i>j} (f_i - f_j)]$ are thus by \Cref{lem:smoothing}, $O_p(\rho^{-p} + n^p\beta^{-p}kn^{-\alpha}\exp(4\beta/\rho))$-Lipschitz. We know that $\alpha = p+1, \beta = \gamma/\ln n$ and $\rho = \gamma/100 \alpha \ln n$, simplifying our bound to $O_p((\ln n/\gamma)^p)$. Furthermore, $k = \floor{(0.1/\gamma)^{2/3}}$ and $\ln n = O(\ln k)$. Hence we can rewrite our upper bound as $O_p\left(k^{3p/2} (\ln k)^{p}\right)$.
\end{proof}

We now see how to prove the query lower bound on optimizing this function class. In order to do so, we need to introduce some intermediate functions. Let $h_i(x) = \max_{j \in [i]} f_i(x)$ and $g_i(x) = \S[h_i](x)$. Let an oracle call to a function $f$ at a point $x$ be denoted by $\Oracle_f(x) = (f(x),\nabla f(x), \nabla^2 f(x), \dots, \nabla^p f(x))$. 
The following results will hold when the set $V$ of orthonormal vectors is chosen uniformly at random (or Haar randomly).
The next two lemmas about these intermediate functions form the backbone of our lower bound.

\begin{restatable}{lemma}{singlesteplemma}
    \label{lem:singlestep}
    Fix any $t \in [0,\dots,k-1]$. Let $V$ be distributed Haar randomly. Conditioned on any fixing of $\{v_i\}_{i \leq t}$, any query $x$ in the unit ball will satisfy $\Oracle_{g}(x) = \Oracle_{g_{t+1}}(x)$ with probability at least $1-1/n^{10}$.
\end{restatable}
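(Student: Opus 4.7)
The plan is to show something stronger: with probability at least $1 - 1/n^{10}$, the functions $h$ and $h_{t+1}$ agree on the entire ball $B_\beta(x)$. If that holds, then by items 2 and 3 of \Cref{lem:smoothing}, $g = \S[h]$ and $g_{t+1} = \S[h_{t+1}]$ have identical value, gradient, and higher derivatives at $x$, so $\Oracle_g(x) = \Oracle_{g_{t+1}}(x)$. Since $h_{t+1} \leq h$ pointwise and $h = \max(h_{t+1}, \max_{i > t+1} f_i)$, agreement on $B_\beta(x)$ is equivalent to $f_i(y) \leq h_{t+1}(y)$ for every $y \in B_\beta(x)$ and every $i > t+1$; it suffices to prove the stronger $f_i(y) \leq f_{t+1}(y)$, which by the definition of $f_i$ reduces to establishing
\[
\smax^{\leq i}_\rho(\aff_V(y)) - \smax^{\leq t+1}_\rho(\aff_V(y)) \leq \rho (i-t-1) n^{-\alpha}.
\]

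\textbf{Concentration over the random tail of $V$.} Conditioned on $v_1, \dots, v_t$, the remaining vectors $v_{t+1}, \dots, v_k$ form a Haar-random orthonormal system inside the $(n-t)$-dimensional subspace orthogonal to $\spn(v_1, \dots, v_t)$. For any \emph{fixed} query $x$ in the unit ball, standard spherical concentration gives $\dotp{v_j}{x}$ Gaussian-like with scale $O(1/\sqrt{n-t})$, so for a suitably large absolute constant $C$, $|\dotp{v_j}{x}| \leq C\sqrt{\ln n / n}$ for each $j > t$ except with probability $n^{-20}$. Union-bounding over the at most $k \leq n^{1/3}$ relevant indices defines an event $\mathcal{E}$ of failure probability at most $1/n^{10}$. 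Lifting from $x$ to any $y \in B_\beta(x)$ costs at most an additive $\beta = \gamma/\ln n$ by Cauchy--Schwarz, so on $\mathcal{E}$ we get $|\dotp{v_j}{y}| \leq \gamma/4$ for every $y \in B_\beta(x)$ and every $j > t$, using that $\gamma = 40\sqrt{\ln n/n}$.

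\textbf{Verifying the softmax inequality.} Condition on $\mathcal{E}$. I write the left-hand side as $\rho \ln(1 + R)$ with
\[
R = \frac{\sum_{j=t+2}^{i} \exp((\dotp{v_j}{y} + (k-j)\gamma)/\rho)}{\sum_{\ell=1}^{t+1} \exp((\dotp{v_\ell}{y} + (k-\ell)\gamma)/\rho)},
\]
and keep only the $\ell = t+1$ term in the denominator so that each summand of $R$ is at most $\exp((\dotp{v_j}{y} - \dotp{v_{t+1}}{y} + (t+1-j)\gamma)/\rho)$. On $\mathcal{E}$ the two inner products contribute at most $\gamma/2$ in total, and $(t+1-j)\gamma \leq -\gamma$ because $j \geq t+2$, so each summand is at most $\exp(-\gamma/(2\rho))$. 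Plugging in $\rho = \gamma/(100\alpha\ln n)$ this equals $n^{-50\alpha}$, so $R \leq k n^{-50\alpha}$. Since $\ln(1+R) \leq R$ and $\alpha = p+1$, this is far below the target $(i-t-1) n^{-\alpha}$, closing the argument.

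\textbf{Main obstacle.} The one genuine subtlety is that $\dotp{v_{t+1}}{y}$ enters the denominator with a negative sign and can, a priori, blow up the ratio $R$; this is exactly why the gap $\gamma$ is chosen as $40\sqrt{\ln n/n}$---comfortably larger than the concentration scale---so that even after paying the concentration cost for \emph{both} $v_{t+1}$ and $v_j$, and absorbing the $\beta$-expansion in $y$, the $(t+1-j)\gamma$ term still dominates and provides exponential slack in units of $\rho$. The remaining ingredients (spherical concentration, Lipschitz extension from $x$ to $B_\beta(x)$, and the parameter substitutions) are routine.
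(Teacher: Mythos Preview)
Your proposal is correct and follows essentially the same route as the paper: reduce $\Oracle_g(x)=\Oracle_{g_{t+1}}(x)$ to $h=h_{t+1}$ on $B_\beta(x)$ via the locality of $\S$, then use spherical concentration on $\dotp{v_j}{x}$ for $j>t$ (the paper packages this as \Cref{lem:ballconcentration}) together with the $\beta$-Lipschitz extension to $y$ to force each numerator term in the softmax ratio to be exponentially small in $\gamma/\rho$. The only cosmetic differences are in how the ratio is bounded (you sum the per-term bounds, the paper takes $k$ times the max) and in the exact constant bookkeeping; both reach the same conclusion with ample slack.
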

\begin{restatable}{lemma}{finaloutputlemma}
    \label{lem:finaloutput}
    Let $V$ be distributed Haar randomly. Conditioned on any fixing of $\{v_i\}_{i \leq k-1}$, any point $x$ in the unit ball will be $\epsilon$-optimal for $g$ with probability at most $1/n^{10}$.
\end{restatable}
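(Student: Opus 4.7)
The plan is to argue that a single query cannot reveal useful information about $v_k$: namely, I will (i) exhibit an explicit near-optimum in the unit ball, (ii) show that at any $x$ the value $g(x)$ is lower bounded, up to small additive slack, by $\dotp{v_k}{x}$, reducing $\eps$-optimality to the event ``$\dotp{v_k}{x}$ is very negative,'' and (iii) rule this event out by Haar concentration on the orthogonal complement of $\spn(v_1,\ldots,v_{k-1})$.

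For the near-optimum, set $x^* = -\tfrac{1}{\sqrt{k}}\sum_{i=1}^{k} v_i$. Since the $v_i$ are orthonormal, $\|x^*\|=1$ and $\dotp{v_i}{x^*} = -1/\sqrt{k}$ for every $i$, so every coordinate of $\aff_V(x^*)$ equals $-1/\sqrt{k} + (k-i)\gamma$. The standard soft-max sandwich $\max_j z_j \leq \smax_\rho(z) \leq \max_j z_j + \rho\log k$ then gives $f_i(x^*) \leq -1/\sqrt{k} + (k-1)\gamma + \rho\log k + \rho(k-i)n^{-\alpha}$. Taking the max over $i \in [k]$ and applying item~6 of \Cref{lem:smoothing} (with $G=1$, since $h$ is $1$-Lipschitz -- each $f_i$ has $\|\nabla f_i\| \leq 1$ by orthonormality of $V$ and the fact that soft-max gradients are probability vectors) yields
\[
 g(x^*) \;\leq\; -\tfrac{1}{\sqrt{k}} + (k-1)\gamma + O\!\left(\rho \log k + \beta + \rho k\, n^{-\alpha}\right).
\]

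For the pointwise lower bound, item~6 of \Cref{lem:smoothing} and $h \geq f_k$ give, for any $x$ with $\|x\|\leq 1$,
\[
 g(x) \;\geq\; h(x) - \beta \;\geq\; f_k(x) - \beta \;=\; \smax_\rho(\aff_V(x)) - \beta \;\geq\; \dotp{v_k}{x} - \beta,
\]
using that soft-max dominates every coordinate and the $k$th coordinate of $\aff_V(x)$ is exactly $\dotp{v_k}{x}$. Combining these, $\eps$-optimality of $x$ forces
\[
 \dotp{v_k}{x} \;\leq\; -\tfrac{1}{\sqrt{k}} + (k-1)\gamma + O\!\left(\rho\log k + \beta + \rho k\, n^{-\alpha}\right) + \eps.
\]
The parameters are tuned so that $(k-1)\gamma \leq 0.1/\sqrt{k}$, and a short check using $\rho, \beta \leq \gamma = \tilde{\Theta}(k^{-3/2})$ and $\alpha = p+1 \geq 2$ shows the remaining slack terms are $o(1/\sqrt{k})$. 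In the meaningful regime $\eps \leq c'/\sqrt{k}$ (the scale at which the lemma is eventually applied in the main theorem), this reduces to $\dotp{v_k}{x} \leq -c/\sqrt{k}$ for an absolute constant $c>0$.

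The final step is Haar concentration. Conditioned on $v_1,\ldots,v_{k-1}$, the vector $v_k$ is uniform on the unit sphere of the $(n-k+1)$-dimensional subspace $U$ orthogonal to $\spn(v_1,\ldots,v_{k-1})$. For the fixed query $x$, the scalar $\dotp{v_k}{x} = \dotp{v_k}{P_U x}$ has the distribution of $\|P_U x\|\cdot u_1$ for a Haar-random $u$ on $S^{n-k}$, so standard spherical concentration gives $\Pr[\dotp{v_k}{x} \leq -c/\sqrt{k}] \leq \exp(-\Omega(n/k)) \leq 1/n^{10}$, since $k \ll n/\log n$. The only mildly delicate point is the parameter bookkeeping that makes the slack in step~(ii) genuinely negligible compared to $1/\sqrt{k}$; everything else is routine, and note that this lemma concerns a single fixed point $x$, so the quantum/parallel aspects of the main theorem do not enter here.
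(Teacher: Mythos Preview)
Your proof is correct and follows essentially the same route as the paper: both arguments exhibit $x^* = -\tfrac{1}{\sqrt{k}}\sum_i v_i$ as a point with $g(x^*) \leq -\Omega(1/\sqrt{k})$, lower bound $g(x)$ via $h(x) \geq f_k(x) \geq \dotp{v_k}{x}$, and then invoke spherical concentration on $\dotp{v_k}{x}$ in the orthogonal complement of $\spn(v_1,\ldots,v_{k-1})$. The only cosmetic difference is that the paper conditions on the high-probability event $|\dotp{v_k}{x}| \leq 10\sqrt{\ln n/n}$ (its \Cref{lem:ballconcentration}) and then shows non-optimality deterministically, whereas you phrase it contrapositively as ``$\eps$-optimality forces $\dotp{v_k}{x} \leq -c/\sqrt{k}$''; the underlying estimates and parameters are identical.
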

The proofs of both of these use the following lemma.
\begin{lemma}\label{lem:ballconcentration}
	Fix any $t \in [0,\dots,k-1]$. Conditioned on any fixing of $\{v_i\}_{i \leq t}$, any query $x$ in the unit ball will, with probability $1-1/n^{10}$, satisfy $\forall i>t\,\,\abs{\dotp{v_i}{x}} \leq 10\sqrt{\frac{\ln n}{n}}$.
\end{lemma}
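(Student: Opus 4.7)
The plan is to reduce to a single-vector spherical concentration inequality and then union bound. First I would fix $v_1,\dots,v_t$ and the query $x$ with $\|x\|\le 1$ (which may depend on the fixed vectors). Let $W = \spn(v_1,\dots,v_t)^\perp$, and let $x' = P_W x$ be the projection of $x$ onto $W$, so $\|x'\|\le 1$. Since $v_i \in W$ for every $i>t$, we have $\dotp{v_i}{x} = \dotp{v_i}{x'}$, and it suffices to bound $|\dotp{v_i}{x'}|$ by $10\sqrt{\ln n/n}$ for each such $i$.

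Conditioned on $v_1,\dots,v_t$, the tuple $(v_{t+1},\dots,v_k)$ is Haar-distributed on the Stiefel manifold of orthonormal $(k-t)$-frames in $W$; in particular, by rotational symmetry, each individual $v_i$ is marginally uniform on the unit sphere of $W$. The subspace $W$ has dimension $m = n-t \ge n-k$, which for our parameter setting ($k = \lfloor (0.1/\gamma)^{2/3}\rfloor$ with $\gamma = 40\sqrt{\ln n/n}$) satisfies $m \ge n/2$ for all sufficiently large $n$. Applying the standard L\'evy concentration bound
\begin{equation}
    \Pr_{u \sim S^{m-1}}\bigl[\,|\dotp{u}{w}| \ge s\,\bigr] \le 2\exp\bigl(-(m-1)s^2/2\bigr)
\end{equation}
with $w = x'/\|x'\|$ (the case $x'=0$ is trivial) and $s = 10\sqrt{\ln n/n}$ yields $\Pr[\,|\dotp{v_i}{x'}| \ge 10\sqrt{\ln n/n}\,] \le 2/n^{25}$, where I use $\|x'\| \le 1$ to pass from $|\dotp{v_i}{x'/\|x'\|}|$ back to $|\dotp{v_i}{x'}|$. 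A union bound over the $k-t \le k \le n$ indices $i>t$ then gives total failure probability at most $2k/n^{25} \ll 1/n^{10}$.

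There is no real technical obstacle in this step; the two points worth being careful about are (i) that $x$ is a single fixed query, so the probability is only over the remaining randomness in $V$ and not over the algorithm's possibly adaptive choice (this is why the stronger per-round bound in \Cref{lem:singlestep} presumably has to do additional work), and (ii) that although $v_{t+1},\dots,v_k$ are jointly orthonormal and hence not independent, each individual marginal is still uniform on the sphere in $W$, which is all the union bound requires.
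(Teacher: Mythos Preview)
Your proposal is correct and follows essentially the same approach as the paper: reduce to the marginal distribution of each $v_i$ (uniform on the unit sphere of the $(n-t)$-dimensional orthogonal complement), apply a standard spherical concentration bound with $n-t \ge n/2$, and union bound over the at most $k$ remaining indices. The paper's version is slightly terser (it invokes a concentration inequality from Ball without explicitly writing out the projection onto $W$), but the argument and the resulting numerics are the same.
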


\begin{proof}
	Note that for $i>t$, $v_i$ is distributed uniformly at random from a unit sphere in $\R^{n-t}$. The following useful concentration statement about random unit vectors follows from~\cite[Lemma 2.2]{Ball97}.
	
	\begin{proposition}\label{prop:concentration}
		Let $x\in B(\vec 0, 1)$. Then for a random unit vector $v$, and all $c>0$,
		\begin{equation}
			\Pr_{v}(|\<x,v\>| \geq c) \leq 2e^{-nc^2/2}.
		\end{equation}
	\end{proposition}
	
	Using~\Cref{prop:concentration} and the fact that $n-t > n/2$ we have that for any $x$ in the unit ball,
	\begin{align*}
		\Pr\left[\abs{\dotp{v_i}{x}} \geq 10\sqrt{\frac{\ln n}{n}}\right] &\leq 2e^{-n/2 \left( 10\sqrt{\frac{\ln n}{n}} \right)^2/2}\\
		&\leq 2e^{-25\ln n} \leq n^{-24}.
	\end{align*}
	Applying a union bound for each of the vectors $v_{t+1}, \dots, v_k$, we have that with probability at least $1-1/n^{23}$, $\forall i>t\,\,\abs{\dotp{v_i}{x}} \leq 10\sqrt{\frac{\ln n}{n}}$. (We use the constant $10$ in the lemma statement only because it is a nicer constant than $23$.)
\end{proof}

\begin{proof}[Proof of \Cref{lem:singlestep}]
	To show that $g(x) = g_{t+1}(x)$, we will show that for all $y \in B_{\beta}(x)$, $h(y) = h_{t+1}(y)$. Let $E_x$ be the event that $x$ satisfies $\forall i>t\,\,\abs{\dotp{v_i}{x}} \leq 10\sqrt{\frac{\ln n}{n}}$. We will show that $E_x \implies g(x) = g_{t+1}(x)$. Hence let us assume $E_x$ holds.
	
	We know that $x$ satisfies $\dotp{v_{j}}{x} - \dotp{v_{t+1}}{x} \leq 20\sqrt{\frac{\ln n}{n}}$ for all $j>t+1$. Hence for any $y \in B_{\beta}(x)$, $\dotp{v_{j}}{y} - \dotp{v_{t+1}}{y} \leq 20\sqrt{\frac{\ln n}{n}} + 2\beta$. To show that $h(y) = h_{t+1}(y)$, it is sufficient to show that $f_{t+1}(y) \geq f_j(y)$ for all $j>t+1$.
	
	Note that $f_{t+1}(y) \geq f_j(y)$ if and only if 
	\begin{align*}
		(j-t-1)n^{-\alpha} & \geq 
		\ln\left(\frac{
			\sum_{\ell =1}^{j} \exp\left(
			\frac{\dotp{y}{v_{\ell}} + (k-\ell)\gamma}{\rho}
			\right)
		}{
			\sum_{\ell =1}^{t+1} \exp\left(
			\frac{\dotp{y}{v_{\ell}} + (k-\ell)\gamma}{\rho}
			\right)
		}\right)\\
		& = 
		\ln\left(1 + \frac{
			\sum_{\ell = t+2}^{j} \exp\left(
			\frac{\dotp{y}{v_{\ell}} + (k-\ell)\gamma}{\rho}
			\right)
		}{
			\sum_{\ell =1}^{t+1} \exp\left(
			\frac{\dotp{y}{v_{\ell}} + (k-\ell)\gamma}{\rho}
			\right)
		}\right)
	\end{align*}
	Since $c \geq \ln(1+c)$, the following statement which we will show is in fact stronger.
	\[ n^{-\alpha} \geq \frac{
		k \max_{t+2 \leq \ell \leq j} \exp\left(
		\frac{\dotp{y}{v_{\ell}} + (k-t-2)\gamma}{\rho}
		\right)
	}{
		\exp\left(
		\frac{\dotp{y}{v_{t+1}} + (k-t-1)\gamma}{\rho}
		\right)
	}.\]
	
	This can be rewritten as $- \rho \alpha \ln n \geq \rho \ln k + \max_{t+2 \leq \ell \leq j} \dotp{y}{v_{\ell}} - \dotp{y}{v_{t+1}} - \gamma$, or $\gamma \geq \rho (\ln k + \alpha \ln n) + \max_{t+2 \leq \ell \leq j} \dotp{y}{v_{\ell}} - \dotp{y}{v_{t+1}}$.
	
	We know this last statement is true because the RHS is at most $\rho(1 + \alpha)\ln n + 20\sqrt{\frac{\ln n}{n}} + 2\beta$ which is smaller than $\gamma$, which is $40\sqrt{\frac{\ln n}{n}}$ (recall that $\beta = \gamma/\ln n$ and $\rho = \gamma/100 \alpha \ln n$).
	
	Since $E_x$ is true with probability $1-1/n^{10}$~\Cref{lem:ballconcentration}, the lemma follows.
\end{proof}

\begin{proof}[Proof of \Cref{lem:finaloutput}]
	Again, let $E_x$ be the event that $x$ satisfies $\forall i>t\,\,\abs{\dotp{v_i}{x}} \leq 10\sqrt{\frac{\ln n}{n}}$. Let us assume $E_x$ holds.
	
	The value of $g(x)$ can be lower bound as follows. Since $\dotp{x}{v_k} \geq -10\sqrt{\frac{\ln n}{n}}$, $h(x) \geq f_k(x) \geq \rho \ln \exp\left(\frac{-10\sqrt{\frac{\ln n}{n}}}{\rho}\right) = -10\sqrt{\frac{\ln n}{n}}$. Since $h$ is $1$-Lipschitz, $g(x) \geq -10\sqrt{\frac{\ln n}{n}} - 2\beta \geq -11\sqrt{\frac{\ln n}{n}}$ (because $\beta < 40/\sqrt{n \ln n}$).
	
	For $x^* = \frac{1}{\sqrt{k}} \sum -v_i$, we know each $f_i(x^*)$ is at most $\rho \ln\left( k \exp\left(\frac{-1/\sqrt{k} + k\gamma}{\rho}\right)\right) + kn^{-\alpha}$. This is at most \[ \rho \ln k - \frac{1}{\sqrt{k}} + k\gamma + kn^{-\alpha}. \]This in turn is at most $-0.8/\sqrt{k}$ since $k\gamma \leq 0.1/\sqrt{k}, n \geq \Omega(k^3),\alpha>1$ and $\rho < 1/k$. So $g(x^*) \leq -0.8/\sqrt{k} + 2\beta < -0.7/\sqrt{k}$.
	
	Since $\sqrt{\frac{\ln n}{n}} \ll 1/\sqrt{k}$, $g(x) > g(x^*) + 0.1/\sqrt{k}$ and so $x$ does not optimize $g$.
	
	Since $E_x$ holds with probability $1-1/n^{10}$, the lemma follows.
\end{proof}

To see how Lemmas~\ref{lem:singlestep} and~\ref{lem:finaloutput} above lead us to our lower bound, fix any $k-1$-query algorithm and consider the following experiment. For each $i$ from $1$ to $k-1$, when the algorithm makes its $i$th query do the following.
\begin{itemize}
	\item Sample $v_i$ from the space orthogonal to the vectors $v_1$ to $v_{i-1}$.
	\item Provide the algorithm the value that $\Oracle_{g_{i}}$ returns on the query. Note that the function $g_i$ depends only on the sampled vectors $v_1$ through $v_i$.
\end{itemize}
It follows that the output of the algorithm is independent of the vector $v_k$ (conditioned on the vectors $v_1$ through $v_{k-1}$). Now we use \Cref{lem:finaloutput} to say that with high probability the output of the algorithm is not $\epsilon$-optimal for $g$. We can now use \Cref{lem:singlestep} along with the hybrid argument to conclude that with high probability the transcript of this query algorithm is the same as the actual transcript (i.e. the transcript had $v_1$ to $v_k$ all been sampled at the beginning and all the queries been to $\Oracle_{g}$). Since the transcripts are the same with high probability, the outputs of the algorithms are also the same with high probability. Hence even when all the queries are to $\Oracle_{g}$, with high probability the output is not $\epsilon$-optimal for $g$. This proof is made formal as the proof of~\Cref{thm:randlb}.

\section{Lower bounds}

We can now establish the randomized lower bound using \Cref{lem:gsmoothness}, \Cref{lem:singlestep}, and \Cref{lem:finaloutput}.

\begin{theorem}\label{thm:randlb}
    Let $\mathcal{A}$ be a randomized query algorithm making at most $k-1$ queries to $\Oracle_{g}$. When $V$ is distributed Haar randomly, the probability that the output of $\mathcal{A}$ is $\epsilon$-optimal is $o(1)$.
\end{theorem}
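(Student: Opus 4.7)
The plan is to execute the hybrid argument outlined at the end of Section~\ref{sec:func-props}. By Yao's minimax principle (or by simply averaging over and then fixing a worst-case value of the algorithm's internal coin tosses), it suffices to prove the bound for deterministic algorithms, so let $\A$ be a deterministic algorithm making at most $k-1$ queries against $\Oracle_g$, and let $V$ be sampled Haar randomly up front.

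For $i = 0, 1, \ldots, k-1$, I would define a hybrid experiment $H_i$ that runs $\A$ against an oracle in which the $j$-th query of $\A$ is answered by $\Oracle_{g_j}$ if $j \leq i$ and by $\Oracle_g$ if $j > i$. Note that $H_0$ is the real execution of $\A$, while in $H_{k-1}$ every query is answered via some $\Oracle_{g_j}$ with $j \leq k-1$, so the entire transcript, and hence the output of $\A$, depends only on $v_1, \ldots, v_{k-1}$.

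To pass from $H_{i-1}$ to $H_i$, I would observe that the $i$-th query point $x_i$ produced by $\A$ is identical in the two hybrids since it is determined by the first $i-1$ answers, and these depend only on $v_1, \ldots, v_{i-1}$ (because each $g_j$ with $j \leq i-1$ is a function of $v_1, \ldots, v_j$ only). Conditioning on $v_1, \ldots, v_{i-1}$ and applying \Cref{lem:singlestep} with $t = i-1$ to the now deterministic point $x_i$, the answers $\Oracle_g(x_i)$ and $\Oracle_{g_i}(x_i)$ coincide except with probability $1/n^{10}$ over the remaining randomness of $V$. A union bound over the $k-1$ queries shows that $H_0$ and $H_{k-1}$ produce the same transcript, and therefore the same output, with probability at least $1 - (k-1)/n^{10}$.

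Finally, in $H_{k-1}$ the output $x^\ast$ is a function of $v_1, \ldots, v_{k-1}$ alone, so I would condition on these and invoke \Cref{lem:finaloutput} to conclude that $x^\ast$ is $\epsilon$-optimal for $g$ with probability at most $1/n^{10}$ over the remaining Haar randomness in $v_k$. Combining the two error terms yields that the output of $\A$ in the real experiment is $\epsilon$-optimal with probability at most $k/n^{10} = o(1)$, since $k = O(n^{1/3})$. I expect the only subtle point to be the measurability bookkeeping in the hybrid step, specifically verifying that at the moment \Cref{lem:singlestep} is applied the query $x_i$ really is a deterministic function of the fixed vectors $v_1, \ldots, v_{i-1}$, so that the stated ``fixed $x$'' hypothesis of the lemma is legitimately satisfied; determinism of $\A$ together with the fact that each $g_j$ only depends on $v_1, \ldots, v_j$ makes this essentially automatic.
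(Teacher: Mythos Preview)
Your proposal is correct and follows essentially the same hybrid argument as the paper: fix the algorithm's randomness, define the hybrids $H_i$ (the paper's $\overline{x}^{(i)}$) by replacing the first $i$ oracle calls with $\Oracle_{g_j}$, use \Cref{lem:singlestep} with $t=i-1$ to bound the probability that consecutive hybrids diverge, union-bound over the $k-1$ steps, and finish with \Cref{lem:finaloutput} applied to the output of $H_{k-1}$. The measurability point you flag is exactly the one the paper handles implicitly via the observation that $x_i^{(i-1)} = x_i^{(i)}$ is determined by $v_1,\dots,v_{i-1}$, and your justification of it is correct.
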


\begin{proof}
    Let the success probability of $\mathcal{A}$ be $p_\mathrm{succ}$ when $V$ is distributed Haar randomly. We can fix the randomness of $\mathcal{A}$ to get a deterministic algorithm $\mathcal{B}$ with success probability at least $p_\mathrm{succ}$ on the same distribution.

    Let us denote the transcript of $\mathcal{B}$ as $\overline{x} = (x_1, x_2, \dots, x_{k-1}, x_\mathrm{out})$ where $x_i$ is the $i$th query made and $x_\mathrm{out}$ is the output of the algorithm. Note that these are random variables that depend only on $V$. We now create hybrid transcripts $\overline{x}^{(i)}$ for $0 \leq i \leq k-1$. The hybrid transcript $\overline{x}^{(i)} = (x_1^{(i)},\cdots,x_{k-1}^{(i)},x_{\mathrm{out}}^{(i)})$ is defined as the transcript of $\mathcal{B}$ when, for all $j \leq i$, its $j$th oracle call (which is supposed to be to $\Oracle_{g}$) is replaced with an oracle call to $\Oracle_{g_j}$. Note that
    \begin{itemize}
        \item For any $V$, $\overline{x} = \overline{x}^{(0)}$.
        \item $\overline{x}^{(k-1)}$ is a function of $\{v_i\}_{i \leq k-1}$.
        \item For any $V$, if $\Oracle_{g}(x_{i}^{(i-1)}) = \Oracle_{g_i}(x_{i}^{(i)})$ then $\overline{x}^{(i-1)} = \overline{x}^{(i)}$. This is because they have queried the same oracles in their first $i-1$ calls, given the same input in the $i$th call and gotten the same output, and have been querying the same oracles thereafter.
    \end{itemize}

    We start with the observation that
    \begin{align}
        \Pr_V[x_{\mathrm{out}}^{(k-1)} \text{ is }\epsilon\text{-optimal}] &= \E_{v_1,\dots,v_{k-1}} \left[ \Pr_{v_k | v_{<k}}[x_{\mathrm{out}}^{(k-1)} \text{ is }\epsilon\text{-optimal}] \right] \\
        &\leq n^{-10}. \tag{by \Cref{lem:finaloutput}}
    \end{align}

    Next we show that $\Pr_V[x_{\mathrm{out}}^{(k-1)} = x_{\mathrm{out}}] \geq 1-o(1)$ which will complete the proof.

    \begin{align}
        \Pr_V[x_{\mathrm{out}} \neq x_{\mathrm{out}}^{(k-1)}] &\leq \sum_{i \in [k-1]} \Pr_V[x_{\mathrm{out}}^{(i-1)} \neq x_{\mathrm{out}}^{(i)}]\\
        &\leq \sum_{i \in [k-1]} \Pr_V[\overline{x}^{(i-1)} \neq \overline{x}^{(i)}]\\
        &\leq \sum_{i \in [k-1]} \Pr_V[\Oracle_{g}(x_{i}^{(i-1)}) \neq \Oracle_{g_i}(x_{i}^{(i)})]\\
        &\leq \sum_{i \in [k-1]} \E_{v_1,\dots,v_{i-1}}\left[\Pr_{v_i,\dots,v_k | v_{<i}}[\Oracle_{g}(x_{i}^{(i-1)}) \neq \Oracle_{g_i}(x_{i}^{(i)})] \right]\\
        &\leq k n^{-10},
    \end{align}
    since $x_{i}^{(i-1)} = x_{i}^{(i)}$, and using \Cref{lem:singlestep}.
\end{proof}

We now translate the above lower bound to the optimization setting and establish the randomized lower bound in \Cref{thm:main}.

\begin{proof}[Proof of randomized lower bound in \Cref{thm:main}]\ \\
    Our hard function class had $L_p = O_p(k^{3p/2} (\ln k)^{p})$, $R=1$, $\epsilon = 0.1/\sqrt{k}$. Given these parameters, $L_pR^{p+1}/\epsilon = O_p(k^{(3p+1)/2}(\ln k)^p)$ and $\ln(L_pR^{p+1}/\epsilon) = O_p(\ln k)$. Hence
    \begin{align}
        \left(\frac{L_pR^{p+1}}{\epsilon}\right)^{\frac{2}{3p+1}} \left(\ln\frac{L_pR^{p+1}}{\epsilon}\right)^{-2/3} &\leq O_p\left(k (\ln k)^{\frac{2p}{3p+1}} (\ln k)^{-2/3}\right) \leq O_p(k).
    \end{align}
    Since we have a lower bound of $k$, the theorem statement follows.
\end{proof}

We mention here that~\Cref{lem:singlestep,lem:finaloutput} have even more far-reaching consequences. In~\Cref{sec:infhiding}, we note that these lemmas also prove a lower bound of $k$ in (a) the parallel randomized setting where polynomially many non-adaptive queries are allowed in each round and we want to bound the number of rounds and (b) the quantum setting. Hence the best known deterministic algorithm that was recently discovered is also nearly optimal amongst parallel randomized and quantum algorithms.

 	\section{Parallel Randomized and Quantum Lower Bounds}\label{sec:infhiding}

Our randomized lower bound follows from two important properties satisfied by our hard class of functions. We abstract out these properties and define a generic class of hard functions. A class of functions $\mathscr{F}$ is an information-hiding class of functions if there is a sequence of `partially-informed' functions that reveal very little new information, with $\mathscr{F}$ containing the `fully-informed' functions.

An example the reader may want to keep in mind is the following `Guess the numbers' problem. For a sequence of numbers $A = (a_1, \dots, a_m) \in [N]^m$, consider the function $f_{A}$ that takes as input a sequence $B \in [N]^m$ and returns the sequence $A_{\leq i} 0^{m-i}$ where $i \in [m]$ is the maximum number such that $A_{<i} = B_{<i}$. The task is to learn $A$. An example `partially-informed' function would be $f_{A_{\leq i}0^{m-i}}$. This hides information in the sense that if one doesn't know $A_{\le i}$ (say $A$ is chosen uniformly at random), then for most inputs the output of $f_{A}$ would be the same as the output of $f_{A_{\leq i}0^{m-i}}$, and of course, for no input does the output of $f_{A_{\leq i}0^{m-i}}$ reveal any more information than $A_{\leq i}$.

\begin{definition}[Information-Hiding Class of Functions]\label{def:infhidingfunctions}
	Let $\mR = (\mR_1, \dots, \mR_m)$ be a random variable defining a sequence of functions $f_1, \dots, f_m$ in the sense that setting a value of $\mR_{\leq i}$ fixes the function $f_i$. The class of functions $\{f_m\}$ obtained by ranging over the various values of $\mR$ is an $m$-step $(\delta_1,\delta_2)$-information-hiding class of functions (under the distribution $\mR$) if the sequence satisfies the following properties.
	\begin{enumerate}
		\item For all $1 \leq i < m$ and any setting of $\mR_{< i}$,
		\[ \forall x \in \R^n: \Pr_{\mR_{\geq i} | \mR_{< i}} 
		\left( O_{f_m}(x) = O_{f_i}(x) \right) \geq 1-\delta_1 \]
		where $O_{f}(x)$ is the information about $f$ that the model allows us to query at $x$ (for example the function value, gradient and perhaps higher order derivatives if our queries provide them).
		\item For any setting of $\mR_{< m}$,
		\[ \forall x \in \R^n: \Pr_{\mR_{m} | \mR_{< m}} 
		\left( x \text{ is a correct output for }f_m \right) \leq \delta_2. \]
	\end{enumerate}
\end{definition}

As a corollary of \Cref{lem:singlestep,lem:finaloutput}, we see that our class of hard functions were indeed information-hiding functions.

\begin{corollary}\label{thm:gisinfhiding}
	Let $V = (v_1, \dots, v_k)$ be the random variable that is distributed Haar randomly from the possible choices of $k$ orthonormal vectors from $\R^n$. The sequence of functions $g_1,\dots,g_k$ is a $k$-step $(n^{-10},n^{-10})$-information-hiding class of functions when the allowed queries are function values and derivatives up to the $p$th order derivative.
\end{corollary}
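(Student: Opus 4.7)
The plan is to observe that this corollary is essentially a repackaging of \Cref{lem:singlestep,lem:finaloutput} in the language of \Cref{def:infhidingfunctions}, so the bulk of the work is a careful dictionary between the two setups rather than new analysis. First I would identify the correspondences: set $\mR_i = v_i$, $f_i = g_i$, $m = k$, and take ``correct output'' to mean $\epsilon$-optimal for $g_k$. Since $h_k(x) = \max_{j \in [k]} f_j(x) = h(x)$, we have $g_k = \S[h_k] = \S[h] = g$, so ``the fully-informed function'' $f_m$ in the definition matches the function $g$ studied throughout the paper. The oracle $O_{f}(x)$ in this model is $\Oracle_f(x) = (f(x), \nabla f(x), \dots, \nabla^p f(x))$, matching what appears in the lemmas.

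For the first condition of \Cref{def:infhidingfunctions}, I fix an $i$ with $1 \leq i < k$ and any setting of $\mR_{<i} = (v_1, \dots, v_{i-1})$. Applying \Cref{lem:singlestep} with $t = i-1$ yields, for every $x$ in the unit ball,
\begin{equation}
    \Pr_{v_i, \dots, v_k \mid v_{<i}}\bigl(\Oracle_g(x) = \Oracle_{g_{i}}(x)\bigr) \geq 1 - n^{-10},
\end{equation}
which is the required inequality with $\delta_1 = n^{-10}$ (noting that once we marginalize over $v_{i+1}, \dots, v_k$, the resulting marginal distribution of $v_i$ conditioned on $v_{<i}$ is exactly the Haar-conditional distribution used in \Cref{lem:singlestep}).

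For the second condition, I fix any setting of $\mR_{<m} = (v_1, \dots, v_{k-1})$ and apply \Cref{lem:finaloutput} directly, which gives for every $x$ in the unit ball
\begin{equation}
    \Pr_{v_k \mid v_{<k}}\bigl(x \text{ is $\epsilon$-optimal for } g\bigr) \leq n^{-10},
\end{equation}
matching the requirement with $\delta_2 = n^{-10}$.

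I expect no real obstacle here, since both clauses of the information-hiding definition are verbatim conclusions of the two lemmas once the identifications above are made. The only subtlety worth flagging in the writeup is that queries in the two lemmas are assumed to lie in the unit ball, which is consistent with the optimization setup of \Cref{thm:main} where $R=1$; any algorithm querying outside the unit ball can be handled by restricting attention to the region that matters for optimization, but this is inherited from the earlier setup and does not require new work in the corollary itself.
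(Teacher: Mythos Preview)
Your proposal is correct and matches the paper's own treatment: the paper states this corollary immediately after \Cref{def:infhidingfunctions} with the remark that it follows from \Cref{lem:singlestep,lem:finaloutput}, and your dictionary ($\mR_i = v_i$, $f_i = g_i$, $m = k$, $t = i-1$ in \Cref{lem:singlestep}) is exactly the intended unpacking. The only minor comment is that your parenthetical about marginalizing over $v_{i+1},\dots,v_k$ is unnecessary, since the probability in \Cref{lem:singlestep} is already taken over all of $v_{t+1},\dots,v_k$ conditioned on $v_{\leq t}$, which is precisely $\Pr_{\mR_{\geq i}\mid \mR_{<i}}$.
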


We now prove the hardness of information-hiding classes of functions. We start with the setting of parallel randomized algorithms.

\begin{theorem}\label{thm:parallellb}
	Let $\mathscr{F}$ be an $m$-step $(\delta_1,\delta_2)$-information-hiding class of functions under the distribution $\mR$. Then for any parallel query algorithm $\mathcal{A}$ making $K$ queries per round and using less than $m$ rounds, the probability that the algorithm outputs a correct output for $f$ distributed according to $\mR$ is at most $\delta_2 + mK\delta_1$.
\end{theorem}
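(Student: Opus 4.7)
The plan is to mimic the hybrid argument used in the proof of Theorem~\ref{thm:randlb}, but with hybrids indexed by rounds rather than queries, and with a union bound over the (up to) $K$ queries inside each round. I first fix the algorithm's randomness to obtain a deterministic parallel algorithm $\mathcal{B}$ with success probability at least that of $\mathcal{A}$ on the distribution $\mR$; this is valid because an averaging argument preserves the success probability bound we are after.

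Next I define hybrid transcripts $\overline{x}^{(0)}, \overline{x}^{(1)}, \dots, \overline{x}^{(m-1)}$. In $\overline{x}^{(i)}$, the queries in rounds $1, \dots, i$ are answered using the oracles $O_{f_1}, \dots, O_{f_i}$ respectively, and the queries in the remaining rounds (together with the final output) are answered using $O_{f_m}$. Thus $\overline{x}^{(0)}$ is the true transcript of $\mathcal{B}$ when all queries go to $O_{f_m}$, while $\overline{x}^{(m-1)}$ is determined entirely by $\mR_{\leq m-1}$ and is therefore independent of $\mR_m$. The crucial structural fact I will use is that if the responses to the round-$i$ queries coincide under $O_{f_m}$ and under $O_{f_i}$, then the state of the algorithm entering round $i+1$ is the same in both hybrids $\overline{x}^{(i-1)}$ and $\overline{x}^{(i)}$; since the later rounds are answered identically (by $O_{f_m}$) in both, the two hybrids agree everywhere, including on the output.

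I then bound $\Pr_\mR\bigl[\overline{x}^{(i-1)} \neq \overline{x}^{(i)}\bigr]$ by conditioning on $\mR_{<i}$. Conditional on this, the (at most $K$) queries issued in round $i$ are deterministically determined, since the first $i-1$ rounds are identical in both hybrids and use only information in $\mR_{<i}$. Applying property~1 of Definition~\ref{def:infhidingfunctions} to each such query and union-bounding gives
\begin{equation}
\Pr_{\mR_{\geq i}\mid \mR_{<i}}\bigl[\overline{x}^{(i-1)} \neq \overline{x}^{(i)}\bigr] \leq K\delta_1,
\end{equation}
and then taking expectation over $\mR_{<i}$ and summing over $i=1,\dots,m-1$ yields $\Pr_\mR[\overline{x}^{(0)} \neq \overline{x}^{(m-1)}] \leq mK\delta_1$.

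Finally, because $\overline{x}^{(m-1)}$ is a function only of $\mR_{<m}$, property~2 of Definition~\ref{def:infhidingfunctions} implies that its output is a correct output for $f_m$ with probability at most $\delta_2$. Combining via the triangle inequality,
\begin{equation}
\Pr_\mR[\overline{x}^{(0)} \text{ is correct}] \leq \Pr_\mR[\overline{x}^{(m-1)} \text{ is correct}] + \Pr_\mR[\overline{x}^{(0)} \neq \overline{x}^{(m-1)}] \leq \delta_2 + mK\delta_1,
\end{equation}
which is the claimed bound. There is no substantive obstacle here: the only subtlety is ensuring that the queries in round $i$ are a deterministic function of $\mR_{<i}$ alone in both hybrids $\overline{x}^{(i-1)}$ and $\overline{x}^{(i)}$, so that property~1 (which demands the bound $\delta_1$ for each fixed $x$) can be applied cleanly before any randomness from $\mR_{\geq i}$ is revealed.
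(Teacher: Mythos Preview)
Your proposal is correct and is essentially identical to the paper's own proof: both fix the randomness, define hybrid transcripts $T^{(i)}$ in which rounds $1,\dots,i$ are answered by $O_{f_1},\dots,O_{f_i}$ and later rounds by $O_{f_m}$, bound $\Pr[T^{(i-1)} \neq T^{(i)}]$ via property~1 together with a union bound over the at most $K$ queries in round $i$ (after conditioning on $\mR_{<i}$ so that those queries are fixed), and then invoke property~2 on $T^{(m-1)}$. Your emphasis on the subtlety that the round-$i$ queries are determined by $\mR_{<i}$ alone in both adjacent hybrids is exactly the point the paper uses when writing $S_i^{(i-1)} = S_i^{(i)}$.
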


\begin{proof}
	Let the success probability of $\mathcal{A}$ be $p_\mathrm{succ}$ when $V$ is distributed Haar randomly. We can fix the randomness of $\mathcal{A}$ to get a deterministic algorithm $\mathcal{B}$ with success probability at least $p_\mathrm{succ}$ on the same distribution.
	
	Let us denote the transcript of $\mathcal{B}$ as $T = (S_1, S_2, \dots, S_{m-1}, x_\mathrm{out})$ where $S_i$ is the set of queries made in the $i$th round and $x_\mathrm{out}$ is the output of the algorithm. Note that these are random variables that depend only on $\mR$. We now create hybrid transcripts $T^{(i)}$ for $0 \leq i \leq m-1$. The hybrid transcript $T^{(i)} = (S_1^{(i)},\cdots,S_{m-1}^{(i)},x_{\mathrm{out}}^{(i)})$ is defined as the transcript of $\mathcal{B}$ when, for all $j \leq i$, the oracles calls in round $j$ (which are supposed to be to $\Oracle_{f_m}$) are replaced with oracle calls to $\Oracle_{f_j}$. Note that
	\begin{itemize}
		\item For any $V$, $T = T^{(0)}$.
		\item $T^{(m-1)}$ is a function of $\mR_{\leq m-1}$.
		\item For any $V$, if the answers of $\Oracle_{f_m}$ on $S_{i}^{(i-1)}$ are the same as the answers of $\Oracle_{f_i}$ on $S_{i}^{(i)}$ then $T^{(i-1)} = T^{(i)}$. This is because they have queried the same oracles in their first $i-1$ calls, given the same inputs in the $i$th call and gotten the same output, and have been querying the same oracles thereafter.
	\end{itemize}
	
	We start with the observation that
	\begin{align}
		\Pr_{\mR}[x_{\mathrm{out}}^{(m-1)} \text{ is }\epsilon\text{-optimal}] &= \E_{\mR_{< m}} \left[ \Pr_{\mR_m | \mR_{<m}}[x_{\mathrm{out}}^{(m-1)} \text{ is }\epsilon\text{-optimal}] \right] \\
		&\leq \delta_2. \tag{by property $2$ in \Cref{def:infhidingfunctions}}
	\end{align}
	
	Next we show that $\Pr_{\mR}[x_{\mathrm{out}}^{(m-1)} = x_{\mathrm{out}}] \geq 1-mK\delta_1$ which will complete the proof.
	
	\begin{align}
		\Pr_{\mR}[x_{\mathrm{out}} \neq x_{\mathrm{out}}^{(m-1)}] &\leq \sum_{i \in [m-1]} \Pr_{\mR}[x_{\mathrm{out}}^{(i-1)} \neq x_{\mathrm{out}}^{i}]\\
		&\leq \sum_{i \in [m-1]} \Pr_{\mR}[T^{(i-1)} \neq T^{(i)}]\\
		&\leq \sum_{i \in [m-1]} \Pr_{\mR}[\Oracle_{f_m}(S_{i}^{(i-1)}) \neq \Oracle_{f_i}(S_{i}^{(i)})]\\
		&\leq \sum_{i \in [m-1]} \E_{\mR_{<i}}\left[\Pr_{\mR_{\geq i} | \mR_{<i}}[\Oracle_{f_m}(S_{i}^{(i-1)}) \neq \Oracle_{f_i}(S_{i}^{(i)})] \right]\\
		&\leq m K \delta_1,
	\end{align}
	since $S_{i}^{(i-1)} = S_{i}^{(i)}$, and using property 1 in~\Cref{def:infhidingfunctions} with a union bound over the inputs in each $S_i$.
\end{proof}

We now turn to quantum query algorithms. In our quantum query model a $t$-query quantum query algorithm is a quantum circuit that uses a query oracle $t$ times. The query oracle is implemented by a unitary so that it supports queries in superposition. We allow arbitrarily high precision for the real numbers involved, and our lower bound is independent of the algorithm maker's choice of number of bits of precision. This is the same model used by and described in more detail in~\cite[Section 4.3]{GKNS21}.

We show that an information-hiding class of functions would be hard even for quantum query algorithms to compute. We can't use the above proof since we can't use a union bound on all queried points; a single quantum query may query exponentially many points in superposition. However, we know that a large fraction of this superposition is on points that don't reveal much information. The small fraction of points that do reveal information will not be noticeable to the quantum query algorithm since they are only a small fraction of the superpositioned points. We can then use the hybrid argument again to give a quantum query lower bound analogous to the classical one proved above.

\begin{theorem}\label{thm:quantumlb}
	Let $\mathscr{F}$ be an $m$-step $(\delta_1,\delta_2)$-information-hiding class of functions under the distribution $\mR$. Then for any quantum query algorithm making less than $m$ queries, the probability that the algorithm outputs a correct output for $f$ distributed according to $\mR$ is at most $\delta_2 + 4m\sqrt{\delta_1}$.
\end{theorem}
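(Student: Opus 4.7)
My plan is to adapt the classical hybrid argument of \Cref{thm:parallellb} to the quantum setting via the BBBV superposition technique of~\cite{BBBV97}, replacing pointwise union bounds by $\ell_2$-mass bounds on the query state. Fix a quantum algorithm $\mathcal{A}$ making at most $m-1$ queries (and fix any auxiliary classical randomness so that the only randomness left is the measurement at the end), and define hybrid algorithms $\mathcal{A}^{(0)}, \ldots, \mathcal{A}^{(m-1)}$, where $\mathcal{A}^{(i)}$ runs identically to $\mathcal{A}$ except that for each $j \leq i$ the $j$th query is routed to $\Oracle_{f_j}$ instead of $\Oracle_{f_m}$. Then $\mathcal{A}^{(0)} = \mathcal{A}$, and the output distribution of $\mathcal{A}^{(m-1)}$ depends only on $\mR_{\leq m-1}$, so applying property~2 of \Cref{def:infhidingfunctions} pointwise in $\mR_{<m}$ (and then averaging over $\mR_{<m}$) bounds its success probability by $\delta_2$.

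Next I would bound the expected $\ell_2$ distance between the final states $|\psi_{i-1}\rangle$ and $|\psi_i\rangle$ of consecutive hybrids just before measurement. The two hybrids differ only at the $i$th query and the subsequent unitaries preserve $\ell_2$ distance, so the BBBV hybrid lemma gives, for every fixing of $\mR$,
\[ \bigl\| |\psi_{i-1}\rangle - |\psi_i\rangle \bigr\| \leq \sqrt{2}\cdot\sqrt{\sum_{x \in B_i} |\alpha_{i,x}|^2}, \]
where $B_i = \{x : \Oracle_{f_m}(x) \neq \Oracle_{f_i}(x)\}$ and $|\alpha_{i,x}|^2$ is the probability of observing $x$ in the query register of the (common) state right before the $i$th query. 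The key observation is that this pre-$i$th-query state is produced using only $\Oracle_{f_1}, \ldots, \Oracle_{f_{i-1}}$ in both hybrids, so $|\alpha_{i,x}|^2$ depends on $\mR_{<i}$ only. Hence for any fixing of $\mR_{<i}$, property~1 of \Cref{def:infhidingfunctions} gives
\[ \E_{\mR_{\geq i}|\mR_{<i}}\!\Bigl[\,\sum_{x \in B_i} |\alpha_{i,x}|^2\,\Bigr] = \sum_x |\alpha_{i,x}|^2 \Pr_{\mR_{\geq i}|\mR_{<i}}[x \in B_i] \leq \delta_1, \]
and Jensen's inequality yields $\E_{\mR_{\geq i}|\mR_{<i}} \bigl[\sqrt{\sum_{x \in B_i}|\alpha_{i,x}|^2}\bigr] \leq \sqrt{\delta_1}$.

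Combining the telescoping triangle inequality $\||\psi_0\rangle - |\psi_{m-1}\rangle\| \leq \sum_{i=1}^{m-1}\||\psi_{i-1}\rangle - |\psi_i\rangle\|$, the standard bound that the total variation distance between measurement outcomes is at most the $\ell_2$ distance between the corresponding states, and integration over $\mR$ yields
\[ \bigl|\Pr_\mR[\mathcal{A}\text{ correct}] - \Pr_\mR[\mathcal{A}^{(m-1)}\text{ correct}]\bigr| \leq (m-1)\sqrt{2\delta_1} \leq 4m\sqrt{\delta_1}, \]
so $\Pr_\mR[\mathcal{A}\text{ correct}] \leq \delta_2 + 4m\sqrt{\delta_1}$ as claimed. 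The main obstacle is the careful bookkeeping of which part of $\mR$ each quantity depends on: the amplitudes $|\alpha_{i,x}|^2$ must be independent of $\mR_{\geq i}$ in order to pull them outside the conditional expectation and invoke property~1 separately for each basis state $x$. A minor subtlety is that queries are made over the continuum $\R^n$; under the finite-precision convention of~\cite[Section~4.3]{GKNS21} the query register lives in a (large but) discrete Hilbert space, so the sums $\sum_x |\alpha_{i,x}|^2$ are well-defined and the final bound is independent of the precision.
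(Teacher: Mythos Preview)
Your proposal is correct and follows essentially the same hybrid argument as the paper: define the hybrids $\mathcal{A}^{(i)}$, observe that the pre-$i$th-query state depends only on $\mR_{<i}$ so that property~1 can be applied pointwise over basis states, and use property~2 for the final hybrid. The only cosmetic differences are that the paper bounds $\E_{\mR}\bigl[\|A_t|0\rangle - A_{t-1}|0\rangle\|^2\bigr]\leq 4\delta_1$, telescopes squared norms via Cauchy--Schwarz, and converts to success probability through a projection onto correct outputs, whereas you apply Jensen per step to bound $\E_{\mR}\bigl[\||\psi_{i-1}\rangle-|\psi_i\rangle\|\bigr]$, telescope norms directly, and invoke the TV-distance bound; both routes land on the same $\delta_2+4m\sqrt{\delta_1}$.
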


The proof of this goes via what is commonly called the hybrid argument. Fix any quantum algorithm $A$ making at most $m-1$ queries, specified by the unitaries $U_{m-1}O_{f_m}U_{m-2}O_{f_m} \cdots U_{1}O_{f_m}U_{0}$. Now we define a sequence of unitaries starting with $A_0 = A$ as follows:
\begin{align}\label{eq:unitaries}
	A_0 &\defeq U_{m-1}O_{f_m}U_{m-2}O_{f_m} \cdots O_{f_m}U_{1}O_{f_m}U_{0} \nonumber\\
	A_1 &\defeq U_{m-1}O_{f_m}U_{m-2}O_{f_m} \cdots O_{f_m}U_{1}O_{f_1}U_{0} \nonumber \\
	A_2 &\defeq U_{m-1}O_{f_m}U_{m-2}O_{f_m} \cdots O_{f_2}U_{1}O_{f_1}U_{0}\\
	&\phantom{n}\vdots \nonumber\\
	A_{m-1} &\defeq U_{m-1}O_{f_{m-1}}U_{m-2}O_{f_{m-2}} \cdots O_{f_2}U_{1}O_{f_1}U_{0} \nonumber
\end{align}

Property 1 provides us with the following lemma.

\begin{lemma}[$A_{t}$ and $A_{t-1}$ have similar outputs]\label{lem:similar}
	Let $A$ be a $m-1$ query algorithm and let $A_{t}$ for $t\in [m-1]$ be the unitaries defined in \cref{eq:unitaries}. Then
	\begin{equation}
		\E_{\mR}\bigl(\norm{A_t|0\>-A_{t-1}|0\>}^2\bigr) \leq 4\delta_1.
	\end{equation}
\end{lemma}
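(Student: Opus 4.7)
The plan is to apply the standard hybrid argument. The circuits defining $A_t$ and $A_{t-1}$ are identical except that the $t$-th oracle call is $O_{f_t}$ in the former and $O_{f_m}$ in the latter; the preceding oracle calls ($O_{f_1},\dots,O_{f_{t-1}}$) and initial unitaries are the same in both runs, so the state $|\psi_t\>$ immediately before the $t$-th query is identical in both. Every operation applied after the $t$-th query ($U_t,\ldots,U_{m-1}$ together with the remaining $O_{f_m}$'s) is a unitary common to both circuits, so by unitarity
\[ \|A_t|0\> - A_{t-1}|0\>\| = \|(O_{f_t} - O_{f_m})|\psi_t\>\|. \]

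Next I would expand $|\psi_t\> = \sum_{x,y}\alpha_{x,y}|x\>|y\>$ in the standard query/ancilla basis, where the oracle acts reversibly as $O_f|x\>|y\> = |x\>|y\oplus f(x)\>$. The operator $O_{f_t}-O_{f_m}$ annihilates every basis vector $|x\>|y\>$ with $O_{f_t}(x) = O_{f_m}(x)$, so only the bad set $B_t \defeq \{x : O_{f_t}(x) \neq O_{f_m}(x)\}$ contributes. Using orthogonality across different $x$-registers together with $\|u-v\|^2 \leq 2(\|u\|^2 + \|v\|^2)$ applied on each bad $x$, one obtains
\[ \|(O_{f_t}-O_{f_m})|\psi_t\>\|^2 \leq 4 \sum_{x\in B_t}\sum_y|\alpha_{x,y}|^2. \]

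To finish, I take expectations over $\mR$. The crucial structural fact is that $|\psi_t\>$ (and hence each $\alpha_{x,y}$) is measurable with respect to $\mR_{<t}$ alone, since it is produced by the circuit using only the first $t-1$ oracles $O_{f_1},\dots,O_{f_{t-1}}$, whereas membership in $B_t$ depends on $f_t$ and $f_m$, i.e.\ on $\mR_{\geq t}$ conditional on $\mR_{<t}$. Conditioning on $\mR_{<t}$ and applying property 1 of \Cref{def:infhidingfunctions} pointwise gives $\Pr_{\mR_{\geq t}|\mR_{<t}}[x \in B_t] \leq \delta_1$ for every fixed $x$; interchanging sum and expectation and using $\sum_{x,y}|\alpha_{x,y}|^2 = 1$ then yields the claimed $4\delta_1$.

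The main obstacle I expect is the careful bookkeeping of which coordinates of $\mR$ each random quantity depends on, and matching the classical phrasing of property 1 (which concerns $O_{f_m}(x)$ vs.\ $O_{f_i}(x)$ at a single classical point $x$) to the quantum setting by noting that the two reversible oracles act identically on $|x\>|y\>$ for all $y$ precisely when they agree classically at $x$. The factor of $4$ is an artifact of the slack $\|u-v\|^2 \leq 2(\|u\|^2+\|v\|^2)$ on each bad $x$; no tighter bound is necessary since the next theorem only needs this up to constants.
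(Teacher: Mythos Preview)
Your proposal is correct and follows essentially the same route as the paper's proof: reduce via unitary invariance to $\|(O_{f_t}-O_{f_m})|\psi_t\>\|$, expand $|\psi_t\>$ in the query register, note that only the ``bad'' set $\{x:O_{f_t}(x)\neq O_{f_m}(x)\}$ contributes and bound each such term by $4$ via the triangle inequality, then condition on $\mR_{<t}$ (which fixes $|\psi_t\>$) and invoke Property~1 of \Cref{def:infhidingfunctions} pointwise. The paper writes the state as $\sum_x \alpha_x|x\>|\phi_x\>$ rather than $\sum_{x,y}\alpha_{x,y}|x\>|y\>$, but this is purely notational.
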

\begin{proof}
	From the definition of the unitaries in \cref{eq:unitaries} and the unitary invariance of the spectral norm, we see that $\norm{A_t|0\>-A_{t-1}|0\>} = 
	\norm{ (O_{f_t}-O_{f_m}) U_{t-1}O_{f_{t-1}} \cdots O_{f_1} U_0|0\>}$. Let us prove the claim for any fixed choice of vectors $\mR_{\leq t-1}$, which will imply the claim for any distribution over those vectors. Once we have fixed these vectors, the state $U_{t-1}O_{f_{t-1}} \cdots O_{f_1} U_0|0\>$ is a fixed state, which we can call $|\psi\>$. 
	Thus our problem reduces to showing for all quantum states $|\psi\>$,
	\begin{equation}\label{eq:OVOVt}
		\E_{\mR_{\ge t} | \mR_{< t}}\bigl(\norm{(O_{f_t}-O_{f_m})|\psi\>}^2\bigr) \leq 4\delta_1. 
	\end{equation}
	Now we can write an arbitrary quantum state as $|\psi\>=\sum_x \alpha_x |x\>|\phi_x\>$, where $x$ is the query made to the oracle, and $\sum_x |\alpha_x|^2 =1$.  Thus the LHS of \cref{eq:OVOVt} is equal to
	\begin{equation}
		\E_{\mR_{\geq t} | \mR_{< t}}\left(\Norm{\sum_{x} \alpha_x (O_{f_t}-O_{f_m})|x\>|\phi_x\>}^2\right)
		\leq 
		\sum_{x}  |\alpha_x|^2 \E_{\mR_{\geq t} | \mR_{< t}}\left(\norm{  (O_{f_t}-O_{f_m})|x\>|\phi_x\>}^2\right)        .
	\end{equation}
	
	Since $|\alpha_x|^2$ defines a probability distribution over $x$, we can again upper bound the right hand side for any $x$ instead. 
	Since $O_{f_t}$ and $O_{f_m}$ behave identically for some inputs $x$, the only nonzero terms are those where the oracles respond differently, which can only happen if $O_{f_t}(x) \neq O_{f_m}(x)$. When the response is different, we can upper bound $\norm{  (O_{f_t}-O_{f_m})|x\>|\phi_x\>}^2$ by $4$ using the triangle inequality. Thus for any $x \in \R^n$, we have 
	\begin{equation}
		\E_{\mR_{\geq t} | \mR_{< t}}\left(\norm{(O_{f_t}-O_{f_m})|x\>|\phi_x\>}^2\right)
		\leq 4 \Pr_{\mR_{\geq t} | \mR_{< t}}(O_{f_t}(x) \neq O_{f_m}(x)) \leq 4\delta_1, 
	\end{equation}
	where the last inequality follows from Property $1$.
\end{proof}

And Property 2 provides us with the following.

\begin{lemma}[$A_{m-1}$ does not solve the problem]\label{lem:Akminusone}
	Let $A$ be a $m-1$ query algorithm and let $A_{m-1}$ be as defined in \cref{eq:unitaries}. 
	Let $p_R$ be the probability distribution over $x \in B(\vec 0,1)$ obtained by measuring the output state $A_{m-1}|0\>$ when the randomness $\mR$ is fixed to $R$. Then $\Pr_{R \sim \mR, x \sim p_R} (x \text{ is a correct output}) \leq \delta_2$.
\end{lemma}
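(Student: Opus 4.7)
The plan is to exploit the fact that $A_{m-1}$, as defined in \cref{eq:unitaries}, makes no queries to $O_{f_m}$ at all: its oracle calls are only to $O_{f_1}, \dots, O_{f_{m-1}}$. Since each $O_{f_j}$ depends only on $\mR_{\leq j}$, and hence only on $\mR_{<m}$, the final state $A_{m-1}|0\>$ is a deterministic function of $\mR_{<m}$. Therefore the distribution $p_R$ obtained by measuring this state depends only on $\mR_{<m}$ and is independent of $\mR_m$ conditioned on $\mR_{<m}$.

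With that observation in hand, I would rewrite the target probability by conditioning on $\mR_{<m}$ and then using the tower property. Concretely, sample $\mR_{<m}$, then sample $x \sim p_R$ (which is legitimate since $p_R$ is determined by $\mR_{<m}$), and only afterwards sample $\mR_m$ from its conditional distribution. Since $x$ is fixed once $\mR_{<m}$ and the measurement randomness are fixed, I can apply Property 2 of \Cref{def:infhidingfunctions} to bound, for every such $x$, the conditional probability that $x$ is a correct output for $f_m$ by $\delta_2$.

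Averaging over $\mR_{<m}$ and over $x \sim p_R$ then preserves the bound, yielding
\begin{equation}
\Pr_{R \sim \mR,\, x \sim p_R}(x \text{ is correct})
= \E_{\mR_{<m}} \E_{x \sim p_R}\, \Pr_{\mR_m \mid \mR_{<m}}(x \text{ is correct for } f_m)
\leq \delta_2.
\end{equation}

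There is no real obstacle here beyond being careful about the order of sampling and verifying the conditional independence: the content of the lemma is essentially a restatement of Property 2, with the quantum wrapping only providing the distribution $p_R$ over outputs. The hybrid-argument reasoning that actually uses quantum structure has already been absorbed into \Cref{lem:similar}; this lemma is the simple endpoint of the chain.
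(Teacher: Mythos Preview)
Your proposal is correct and follows essentially the same approach as the paper: both arguments observe that $A_{m-1}$ uses only oracles $O_{f_i}$ for $i<m$, so $p_R$ depends only on $\mR_{<m}$, then condition on $\mR_{<m}$, fix the output $x$, and invoke Property~2 of \Cref{def:infhidingfunctions} before averaging back. The only cosmetic difference is that the paper fixes $\mR_{<m}$ first and then reduces to the pointwise statement about $x$, whereas you phrase it via the tower property, but the content is identical.
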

\begin{proof}
	Let us establish the claim for any fixed choice of $\mR_{< m}$, since if the claim holds for any fixed choice of these vectors, then it also holds for any probability distribution over them. For a fixed choice of vectors, this claim is just $\Pr_{\mR_m, x \sim p_R} (x \text{ is a correct output}) \leq \delta_2$. Now since the algorithm $A_{m-1}$ only has oracles $O_{f_i}$ for $i<m$, the probability distribution $p_R$ only depends on $R_{< m}$. Since these are fixed, this is just a fixed distribution $p$. So we can instead establish our claim for all $x \in B(\vec 0, 1)$, which will also establish it for any distribution.
	
	So what we need to establish is that for any $x \in \R^n$, $\Pr_{\mR_{m}} \left( x \text{ is a correct output} \right) \leq \delta_2$ which is what Property 2 gives us.
\end{proof}

Finally we can put these two lemmas together to prove our lower bound.

\begin{proof}[Proof of~\Cref{thm:quantumlb}]
	Let $A$ be an $m-1$ query algorithm. Let $p_R$ be the probability distribution over $x \in B(\vec 0,1)$ obtained by measuring the output state $A|0\>$ when the randomness $\mR$ is fixed to $R$. We will show that $\Pr_{R \sim \mR, x\sim p_R} (x \text{ is a correct output}) \leq \delta_2 + 4m\sqrt{\delta_1}$, which proves the theorem.

	To this end, let $P_R$ be the projection operator that projects a quantum state $\ket{\psi}$ onto the space spanned by vectors $\ket{x}$ for $x$ such that $x$ is a correct output when $\mR = R$. Then $\| P_R A \ket{0} \|^2 = \Pr_{x\sim p_R} (x \text{ is a correct output})$. We know from \Cref{lem:Akminusone} that $\E_{R \sim \mR}\bigl(\norm{P_R A_{m-1} \ket{0}}^2\bigr) \leq \delta_2$. We prove our upper bound on the probability by showing that it is approximately the same as $\E_{R \sim \mR}\bigl(\norm{P_R A_{m-1} \ket{0}}^2\bigr)$.
	
	\Cref{lem:similar} states that for all $1 \leq t < m$, $\E_{\mR}\bigl(\norm{A_{t}|0\>-A_{t-1}|0\>}^2\bigr) \leq 4\delta_1$. Using telescoping sums and the Cauchy-Schwarz inequality, we see that
	\begin{align}
		\E_{\mR}\bigl( \norm{A_{m-1}|0\>-A|0\>}^2 \bigr) &\leq \E_{\mR}\left( \left(\sum_{t \in [m-1]} \norm{A_{t}|0\>-A_{t-1}|0\>}\right)^2 \right)\\
		&\leq \E_{\mR}\left(\sum_{t \in [m-1]} \norm{A_{t}|0\>-A_{t-1}|0\>}^2\right) \left( \sum_{t \in [m-1]} 1^2 \right) \leq 4\delta_1 \cdot m \cdot m.
	\end{align}
	
	For all $R$, $\abs{\norm{P_R A_{m-1} \ket{0}} - \norm{P_R A \ket{0}}} \leq \norm{P_R A_{m-1} \ket{0} - P_R A \ket{0}} = \norm{P_R (A_{m-1} \ket{0} - A \ket{0})}  \leq \norm{A_{m-1} \ket{0} - A \ket{0}}$. Hence
	\begin{equation}\label{eq:qlowerboundintermediatebound}
		\E_{R \sim \mR}\bigl(\bigl(\norm{P_R A_{m-1} \ket{0}} - \norm{P_R A \ket{0}}\bigr)^2\bigr) \leq 4m^2\delta_1.
	\end{equation}
	
	We want an upper bound on $\E_{R \sim \mR}\bigl(\norm{P_R A \ket{0}}^2 - \norm{P_R A_{m-1} \ket{0}}^2\bigr)$, which is no larger than $2 \E_{R \sim \mR}\bigl(\norm{P_R A \ket{0}} - \norm{P_R A_{m-1} \ket{0}}\bigr)$ since $\norm{P_R A \ket{0}} + \norm{P_R A_{m-1} \ket{0}} \leq 2$. We get such a bound by applying Jensen's inequality to \cref{eq:qlowerboundintermediatebound}: $\E_{R \sim \mR}\bigl(\norm{P_R A \ket{0}} - \norm{P_R A_{m-1} \ket{0}}\bigr) \leq 2m\sqrt{\delta_1}$, and so $\E_{R \sim \mR}\bigl(\norm{P_R A \ket{0}}^2 - \norm{P_R A_{m-1} \ket{0}}^2\bigr) \leq 4m\sqrt{\delta_1}$.
	
	We can now use linearity of expectation and upper bound our required probability as 
	\begin{equation} 
		\Pr_{R \sim \mR,x\sim p_R} (x \text{ is a correct output}) = \E_{R \sim \mR}\bigl(\norm{P_R A \ket{0}}^2\bigr) \leq \delta_2 + 4m\sqrt{\delta_1}. \qedhere
	\end{equation}
\end{proof}

The proofs of the quantum lower bound in \Cref{thm:main} and the highly parallel lower bound alluded to after that now follow from \Cref{thm:parallellb,thm:quantumlb}~and~\Cref{thm:gisinfhiding}.

\begin{corollary}
	The complexity of $\epsilon$-optimizing the class of functions $g$ is:
	\begin{itemize}
		\item $k$ rounds in the parallel randomized setting where in each round $K$ parallel queries are allowed, and $Kn^{-9}\ll 1$. (Note that by modifying the constants in the definition of the function, we can support $K$ being any polynomial in $n$.)
		\item $k$ queries in the quantum setting to get success probability larger than $n^{-4}$.
	\end{itemize}
\end{corollary}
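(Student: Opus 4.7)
The plan is to directly combine Corollary~\ref{thm:gisinfhiding} with the two generic lower bound theorems (Theorem~\ref{thm:parallellb} and Theorem~\ref{thm:quantumlb}), instantiating the parameters $m = k$ and $\delta_1 = \delta_2 = n^{-10}$. All the hard work has been done in establishing that $\{g_1, \ldots, g_k\}$ is a $k$-step $(n^{-10}, n^{-10})$-information-hiding class of functions when $V$ is Haar random, so the corollary reduces to two parameter substitutions.

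For the parallel randomized part, I would apply Theorem~\ref{thm:parallellb} to any parallel algorithm that makes $K$ queries per round in strictly fewer than $k$ rounds. The theorem bounds its success probability by $\delta_2 + mK\delta_1 = n^{-10} + kKn^{-10}$. Using $k \leq O((n/\ln n)^{1/3})$ from our setting of parameters in Section~\ref{sec:func-props} together with the hypothesis $Kn^{-9} \ll 1$, this bound is $o(1)$, so $k$ rounds are necessary. The ``any polynomial in $n$'' remark is immediate: if one wants to support $K = n^c$ for some constant $c$, one just rescales the exponent $10$ in the information-hiding statement by enlarging the concentration exponent $10$ in Lemma~\ref{lem:ballconcentration} (the constant $10$ was arbitrary), giving a $(n^{-(c+2)}, n^{-(c+2)})$-information-hiding class instead.

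For the quantum part, I would apply Theorem~\ref{thm:quantumlb} to any quantum algorithm making fewer than $k$ queries. The theorem bounds its success probability by $\delta_2 + 4m\sqrt{\delta_1} = n^{-10} + 4k\,n^{-5}$. Since $k = O(n^{1/3})$, this quantity is $O(n^{-14/3})$, which is smaller than $n^{-4}$ for all sufficiently large $n$. Consequently, any quantum algorithm achieving success probability greater than $n^{-4}$ must make at least $k$ queries.

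There is no real obstacle here; the step requiring even minor care is simply checking that the chosen values of $k$, $\gamma$, $\rho$, $\beta$, and $\alpha$ are compatible with the claimed bounds, i.e., that $kKn^{-10} = o(1)$ in the parallel case and $4kn^{-5} < n^{-4}$ in the quantum case. Both follow from $k$ being a small polynomial in $n$, specifically $k = O(n^{1/3})$. The proof therefore consists of a couple of lines of arithmetic after citing the three prior results.
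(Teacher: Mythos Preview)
Your proposal is correct and follows exactly the approach the paper intends: the paper does not give an explicit proof of this corollary but simply states that it follows from Corollary~\ref{thm:gisinfhiding} together with Theorems~\ref{thm:parallellb} and~\ref{thm:quantumlb}, which is precisely the substitution $m=k$, $\delta_1=\delta_2=n^{-10}$ and the arithmetic you carry out.
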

 	\section{Conclusion}
In this paper, we obtained near optimal oracle lower bounds for $p^{\textrm{th}}$-order smooth convex optimization, for any constant $p$, for randomized algorithms. Our results further hold for quantum algorithms and parallel randomized algorithms as well. To obtain our results, we introduce a new smoothing operator that could be of independent interest. An interesting open problem is to obtain both tight upper and lower bounds when we have oracle access only up to $q^\textrm{th}$-order derivatives even though the function is guaranteed to have Lipschitz $p^\textrm{th}$-order derivatives, for $q < p$.
 	

\newcommand{\etalchar}[1]{$^{#1}$}

\appendix

\section{Deferred proofs: Smoothing preliminaries}\label{sec:prelims}

Recall the definitions of our smoothing operators.

Let $B_{\eta}(x)$ be the ball of radius $\eta$ around $x$. For any function $f:\R \rightarrow \R$ and real-valued $\eta > 0$, define the function $S_{\eta}[f]:\R^n \rightarrow \R$ as $S_{\eta}[f](x) = \E_{y \in B_{\eta}(x)}[f(y)]$. 

\smoothing*

We claimed the following about $\S$.

\smoothingproperties*

While most of the above statements are proven below from first principles, a couple of them follow from~\cite[Corollary~2.4]{agarwal2018lower}. For these statements, we only detail the main ideas involved in their proofs.
\begin{proof}
    We prove the above statements in order.
    \begin{enumerate}
        \item This is a simple consequence of the linearity of expectation.
        \item By expanding the expectations in the definition of $\S$, we get that $\S[f](x) = \E_{y \sim \mu_x}[f](x)$ where $\mu_x$ is a distribution supported in $B_{(1-2^{-p})\beta}(x)$.
        \item The gradient and higher order derivatives of $\S[f]$ at $x$ depend only on the values of $\S[f]$ in an open ball around $x$, say $B_{2^{-p}\beta}(x)$. For any $y \in B_{2^{-p}\beta}(x)$, $\S[f](y)$ depends only on values of $f$ in $B_{(1-2^{-p})\beta}(y) \subseteq B_{\beta}(x)$.
        \item This follows from the proof of~\cite[Corollary~2.4]{agarwal2018lower}. It is easy to see that if $f$ is $L$-Lipschitz then $S_{\eta}[f]$ is also $L$-Lipschitz. $\nabla f$ being $L$-Lipschitz is equivalent to saying that for any unit vector $v \in \R^n$, $g_v(x) \defeq \nabla f(x) [v]$ is an $L$-Lipschitz function. However by linearity of $S_{\eta}$, $S_{\eta}[g_v](x) = \nabla S_{\eta}[f](x) [v]$. Hence $\nabla S_{\eta}[f]$ is $L$-Lipschitz. A repeated usage of this argument as done in~\cite{agarwal2018lower} proves the statement.
        \item The proof of this is via a repeated usage of~\cite[Lemma~2.3]{agarwal2018lower} as done in~\cite[Corollary~2.4]{agarwal2018lower}. They argue via Stoke's theorem that $S_{\eta}[f]$ is differentiable even when $f$ may not be differentiable in a set of measure $0$, and that $\nabla S_{\eta}[f]$ is $\frac{n}{\eta}G$-Lipschitz. They then use directional derivatives to inductively show via the same argument the Lipschitzness of the higher-order derivatives.
        \item This is a simple consequence of the fact that $\S[f](x)$ is a convex combination of the values $f(y)$ for $y \in B_{\beta}(x)$.
        \item For $y \in B_{\beta}(\vec{0})$, let $f_y$ be defined as $f_y(x) = f(x+y)$. Since $f$ is convex, it follows that each $f_y$ is convex. Also $\S[f] = \E_{y \in B_{\beta}(\vec{0})}[f_y]$. Since $\S[f]$ is a convex combination of convex functions, it follows that $\S[f]$ is convex.\qedhere
    \end{enumerate}
\end{proof}

Recall the definition of the softmax function.

\softmax*

We claimed the following about the softmax function.

\softmaxproperties*

\begin{proof}
    We prove the statements in order.
    \begin{enumerate}
        \item This is straightforward.
        \item We can conclude the $1$-Lipschitzness by looking at the norm of the gradients.\\
        \begin{equation}
            \| \nabla \smax_{\rho}(x) \| = \frac{\| (\exp(x_1/\rho),\dots,\exp(x_n/\rho)) \|}{\sum_i \exp(x_i/\rho)} \leq \frac{\sum_i \abs{\exp(x_i/\rho)}}{\sum_i \exp(x_i/\rho)} = 1.
        \end{equation}
        The convexity follows from analyzing the Hessian. We get the following as the Hessian.\\
        \begin{equation}
            \nabla^2 \smax_{\rho}(x)_{i,j} =
            \begin{dcases}
                \frac{1}{\rho} \left( \frac{\exp(x_i/\rho)}{\sum_{t \in [n]}\exp(x_t/\rho)} - \frac{\exp(2x_i/\rho)}{(\sum_{t \in [n]}\exp(x_t/\rho))^2} \right) &\text{ if $i=j$}\\
                \frac{1}{\rho} \left( - \frac{\exp((x_i + x_j)/\rho)}{(\sum_{t \in [n]}\exp(x_t/\rho))^2} \right) &\text{ otherwise}
            \end{dcases}
        \end{equation}
        Let $v \in \R^n$ denote the column vector $\nabla \smax_{\rho}(x)$. Then $\nabla^2 \smax_{\rho}(x) = \frac{1}{\rho} (\mathrm{diag}(v) - vv^{\mathsf{T}})$. The convexity of $\smax_{\rho}$ is equivalent to $\nabla^2 \smax_{\rho}(x)$ being positive semidefinite for all $x$. Since $\rho>0$, it suffices to prove that $M = \rho \nabla^2 \smax_{\rho}(x)$ is positive semidefinite. To this end, let $y \in \R^n$ be any column vector.\\
        \begin{align}
            y^{\mathsf{T}} M y &= \sum_{i \in [n]} y_i^2 v_i - \dotp{y}{v}^2\\
            &= \left(\sum_{i \in [n]} y_i^2 v_i\right) \left(\sum_{i \in [n]} v_i\right) - \left(\sum_{i \in [n]} y_iv_i\right)^2 \tag{since $\sum_{i \in [n]} v_i = 1$}\\
            &\geq 0.
        \end{align}
        The last inequality follows by using the Cauchy-Schwarz inequality on the vectors $(y_i \sqrt{v_i})_{i \in [n]}$ and $(\sqrt{v_i})_{i \in [n]}$. Here we use the fact that each $v_i$ is nonnegative.
        \item This is proven in~\cite[Theorem~7]{bullins20highlysmooth}.\qedhere
    \end{enumerate}
\end{proof}

\softmaxgradients*

\begin{proof}
    $\frac{\smax_{\rho}(x) - \smax^{\leq m}_{\rho}(x)}{\rho} = \delta$ implies that
    \begin{equation}
    \delta = \ln\left(\frac{
        \sum_{i=1}^{n} \exp\left(
            \frac{x_i}{\rho}
        \right)
    }{
        \sum_{i=1}^{m} \exp\left(
            \frac{x_i}{\rho}
        \right)
    }\right)
        =
    \ln\left(1 + \frac{
        \sum_{i= m+1}^{n} \exp\left(
            \frac{x_i}{\rho}
        \right)
    }{
        \sum_{i=1}^{m} \exp\left(
            \frac{x_i}{\rho}
        \right)
    }\right)
    \end{equation}
    
    Let $c = \frac{\sum_{i=m+1}^{n} \exp(x_i/\rho)}{\sum_{i=1}^{m} \exp(x_i/\rho)}$. Since $\delta = \ln(1+c) \geq c/2$ for $\delta<1$, an upper bound of $2c$ would suffice to prove the lemma.

    Using the equation for the gradient of $\smax$ from~\Cref{lem:smaxsmoothness}, we get $\| \nabla \smax_{\rho}(x) - \nabla \smax^{\leq m}_{\rho}(x) \|$ to be equal to
    \begin{align}
        &\frac{(\exp\left(\frac{x_1}{\rho}\right), \dots, \exp\left(\frac{x_{n}}{\rho}\right))}{ \sum_{i=1}^{n} \exp\left(\frac{x_{i}}{\rho}\right)} - \frac{(\exp\left(\frac{x_{1}}{\rho}\right), \dots, \exp\left(\frac{x_{m}}{\rho}\right), 0, \dots, 0 )}{ \sum_{i=1}^{m} \exp\left(\frac{x_{i}}{\rho}\right)}\\
        = &\frac{(\exp\left(\frac{x_{1}}{\rho}\right), \dots, \exp\left(\frac{x_{n}}{\rho}\right))}{ \sum_{i=1}^{n} \exp\left(\frac{x_{n}}{\rho}\right)} - \frac{(1+c)(\exp\left(\frac{x_{1}}{\rho}\right), \dots, \exp\left(\frac{x_{m}}{\rho}\right), 0, \dots, 0 )}{ (1+c)\sum_{i =1}^{m} \exp\left(\frac{x_{i}}{\rho}\right)}\\
        = &\frac{-c(\exp\left(\frac{x_{1}}{\rho}\right), \dots, \exp\left(\frac{x_{m}}{\rho}\right)) + (0,\cdots,0,\exp\left(\frac{x_{m+1}}{\rho}\right), \dots, \exp\left(\frac{x_{n}}{\rho}\right), 0, \dots, 0 )}{ (1+c)\sum_{i =1}^{m} \exp\left(\frac{x_{i}}{\rho}\right)}
    \end{align}
    The norm of this is at most
    \begin{align}
        & \frac{\|-c(\exp\left(\frac{x_{1}}{\rho}\right), \dots, \exp\left(\frac{x_{m}}{\rho}\right))\| + \|(\exp\left(\frac{x_{m+1}}{\rho}\right), \dots, \exp\left(\frac{x_{n}}{\rho}\right))\|}{ (1+c)\sum_{i =1}^{m} \exp\left(\frac{x_{i}}{\rho}\right)}\\
        \leq & \frac{c}{1+c} \frac{\sum_{i =1}^{m} \abs{\exp\left(\frac{x_{i}}{\rho}\right)}}{\sum_{i =1}^{m} \exp\left(\frac{x_{i}}{\rho}\right)} + \frac{1}{1+c} \frac{\sum_{i =m+1}^{n} \abs{\exp\left(\frac{x_{i}}{\rho}\right)}}{\sum_{i =1}^{m} \exp\left(\frac{x_{i}}{\rho}\right)}\\
        \leq & \frac{c}{1+c} + \frac{c}{1+c} < 2c.\qedhere
    \end{align}
\end{proof}

\end{document}